\pgfplotsset{compat=1.18}
\definecolor{uuuuuu}{rgb}{0.27,0.27,0.27}
\definecolor{sqsqsq}{rgb}{0.1255,0.1255,0.1255}
\newtheorem{definition}{Definition} [section]
\newtheorem{theorem}[definition]{Theorem}
\newtheorem{lemma}[definition]{Lemma}
\newtheorem{corollary}[definition]{Corollary}
\newtheorem{claim}[definition]{Claim}
\newtheorem{problem}[definition]{Problem}
\newsavebox\myboxA
\newsavebox\myboxB
\newlength\mylenA
\newcommand*\xoverline[2][0.75]{%
    \sbox{\myboxA}{$\m@th#2$}%
    \setbox\myboxB\null
    \ht\myboxB=\ht\myboxA%
    \dp\myboxB=\dp\myboxA%
    \wd\myboxB=#1\wd\myboxA
    \sbox\myboxB{$\m@th\overline{\copy\myboxB}$}
    \setlength\mylenA{\the\wd\myboxA}
    \addtolength\mylenA{-\the\wd\myboxB}%
    \ifdim\wd\myboxB<\wd\myboxA%
       \rlap{\hskip 0.5\mylenA\usebox\myboxB}{\usebox\myboxA}%
    \else
        \hskip -0.5\mylenA\rlap{\usebox\myboxA}{\hskip 0.5\mylenA\usebox\myboxB}%
    \fi}
\begin{document}
\title{\bf\Large Odd hypergraph Mantel theorems}
\author{
Jianfeng Hou \quad
Xizhi Liu \quad 
Yixiao Zhang \quad 
Hongbin Zhao \quad 
Tianming Zhu
\vspace{-1.5em} 
}
\date{\today}
%
\maketitle
\begin{abstract}
    A classical result of Sidorenko (1989) shows that the Tur\'{a}n density of every $r$-uniform hypergraph with three edges is bounded from above by $1/2$. For even $r$, this bound is tight, as demonstrated by Mantel's theorem on triangles and Frankl's theorem on expanded triangles. In this note, we prove that for odd $r$, the bound $1/2$ is never attained, thereby answering a question of Keevash and revealing a fundamental difference between hypergraphs of odd and even uniformity. Moreover, our result implies that the expanded triangles form the unique class of three-edge hypergraphs whose Tur\'{a}n density attains $1/2$. 
\end{abstract}

\renewcommand\thefootnote{} 
\footnotetext{\textit{Keywords:} hypergraph, Tur\'{a}n density, expanded triangle, stability method \\[0.15em]
\textit{MSC2020:} 05C35, 05C65, 05D05}
\addtocounter{footnote}{-1} 
\renewcommand\thefootnote{\arabic{footnote}} 

\section{Introduction}\label{SEC:Introduction}
Given an integer $r\ge 2$, an \emph{$r$-uniform hypergraph} (henceforth an \emph{$r$-graph}) $\mathcal{H}$ is a collection of $r$-subsets of some set $V$. 
We call $V$ the \emph{vertex set} of $\mathcal{H}$ and denote it by $V(\mathcal{H})$. 
When $V$ is understood, we usually identify a hypergraph $\mathcal{H}$ with its edge set and use $|\mathcal{H}|$ to denote the number of edges of $\mathcal{H}$. 

Given a family $\mathcal{F}$ of $r$-graphs, an $r$-graph $\mathcal{H}$ is \emph{$\mathcal{F}$-free} if it does not contain any member of $\mathcal{F}$ as a subgraph. 
The \emph{Tur\'{a}n number} $\mathrm{ex}(n,\mathcal{F})$ of $\mathcal{F}$ is the maximum number of edges in an $\mathcal{F}$-free $r$-graph on $n$ vertices, and the limit $\pi(\mathcal{F}) \coloneqq \lim_{n \to \infty} \mathrm{ex}(n,\mathcal{F})/\binom{n}{r}$ is called the \emph{Tur\'{a}n density} of $\mathcal{F}$. 
The study of the Tur\'{a}n numbers and Tur\'{a}n densities is a central topic in Extremal Combinatorics. 
Much is known when $r = 2$, thanks to the seminal works of Tur\'{a}n~\cite{Tur41} and Erd\H{o}s--Stone~\cite{ES46} (see also~\cite{ES66}). 
For $r \ge 3$, determining $\pi(\mathcal{F})$ is notoriously difficult in general, despite significant effort devoted to this area.
For results up to~2011, we refer the reader to the excellent survey by Keevash~\cite{Kee11}. 

In this note, we study the Tur\'{a}n density of $r$-graphs with exactly three edges. 
For the case $r=2$, the classical Mantel theorem~\cite{Mantel07} from 1907 shows that the Tur\'{a}n density of the triangle is $1/2$, and the tightness of this bound is witnessed by the balanced complete bipartite graphs. 
Define 
\begin{align*}
    \gamma(r)
    \coloneqq \max\left\{\pi(F) \colon \text{$F$ is an $r$-graph with exactly three edges} \right\}. 
\end{align*}
As an extension of Mantel's theorem to hypergraphs, a general theorem of Sidorenko~\cite{Sid89} shows that the Tur\'{a}n density of an $r$-graph with exactly three edges is always bounded from above by $1/2$, that is, $\gamma(r) \le 1/2$. 
For an integer $k \ge 1$, let 
\begin{align*}
    \mathbb{T}_{2k} 
    \coloneqq \left\{S_1 \cup S_2,~S_2\cup S_3,~S_3 \cup S_1 \right\} 
\end{align*}
denote the $2k$-uniform hypergraph (known as the \emph{expanded triangle}) on vertex set $S_1 \cup S_2 \cup S_3$, where $S_1, S_2, S_3$ are pairwise disjoint $k$-sets.  
A classical result of Frankl~\cite{Fra90} shows that $\pi(\mathbb{T}_{2k}) = 1/2$. 
In particular, $\gamma(r) = 1/2$ for even $r$, that is, Sidorenko's bound is tight when $r$ is even. 

In a work strengthening Sidorenko's result, Keevash~\cite{Kee05} posed the question of whether the bound $\gamma(r) \le 1/2$ is ever tight when $r$ is odd. 
This question was later raised again in~\cite{Kee11} and~\cite{MPS11}. 
Keevash also observed that for $r=3$, classical results of Bollob\'{a}s~\cite{Bo74} and de Caen~\cite{Cae83} show that $\gamma(3)$ is strictly smaller than $1/2$. 

In this note, we answer Keevash's question by showing that for odd $r$, the bound $1/2$ is never attained, thereby revealing an interesting difference between hypergraphs of even and odd uniformity.

\begin{theorem}\label{THM:odd-Mantel}
    Suppose that $r$ is a positive odd integer. Then $\gamma(r) < 1/2$. 
\end{theorem}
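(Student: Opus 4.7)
The approach is by contradiction via a stability analysis. Suppose, toward contradiction, there is an $r$-graph $F = \{e_1, e_2, e_3\}$ with $r$ odd and $\pi(F) = 1/2$. Then Sidorenko's bound is saturated, so there is a sequence of $F$-free $r$-graphs $H_n$ on $n$ vertices with $|H_n|/\binom{n}{r} \to 1/2$, whose structure we would analyze asymptotically.

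The first step is to establish a stability theorem: every $F$-free $r$-graph with at least $(1/2 - o(1))\binom{n}{r}$ edges agrees, up to $o(n^r)$ edits, with a \emph{bipartite parity construction} $\mathcal{B}(A, S)$, parametrized by a balanced bipartition $V = A \sqcup B$ and a set $S \subseteq \{0, 1, \ldots, r\}$, and consisting of all $e \in \binom{V}{r}$ with $|e \cap A| \in S$. I would follow Frankl's stability template for $\mathbb{T}_{2k}$, using Lagrangian-type weights, symmetrization, and link analysis, adapted to general three-edge $F$.

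The density of $\mathcal{B}(A, S)$ tends to $2^{-r}\sum_{s \in S}\binom{r}{s}$, so density $1/2$ forces $\sum_{s \in S}\binom{r}{s} = 2^{r-1}$. Moreover, for $\mathcal{B}(A, S)$ to be genuinely $F$-free on sufficiently many vertices, there must be no map $x \colon V(F) \to \{0, 1\}$ with $\sum_{v \in e_i}x(v) \in S$ for each $i$; setting $x \equiv 0$ and $x \equiv 1$ immediately forces $0, r \notin S$, so $S \subseteq \{1, \ldots, r-1\}$. I would then rule out such $S$ for $r$ odd. The canonical solution $S = \{s : s \text{ odd}\}$ of the density identity $\sum_s \binom{r}{s} = 2^{r-1}$ contains $r$ when $r$ is odd, hence is excluded; the ``next best'' candidate $\{s \text{ odd}\} \setminus \{r\}$ sums to $2^{r-1} - 1$. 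The remaining candidates are handled by a mixture of (a) Kummer-type divisibility on $\binom{r}{s}$ when $r$ has a small prime factor and (b) direct embedding constructions that exploit the fact that odd $r$ breaks the complementation symmetry $s \leftrightarrow r-s$, ultimately producing the sought contradiction.

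The principal difficulty lies in the stability step. Frankl's argument was designed for the symmetric structure of $\mathbb{T}_{2k}$, and extending it to arbitrary three-edge $F$ requires controlling varying intersection patterns $|e_i \cap e_j|$ and $|e_1 \cap e_2 \cap e_3|$, likely via symmetrization, shifting, and a case analysis on the intersection profile of $F$. The Diophantine analysis in the final step is also subtle for composite odd $r$ (for example $r = 15$, where the $\binom{r}{s}$ have trivial common divisor), and may require an ad hoc embedding argument rather than a clean divisibility obstruction.
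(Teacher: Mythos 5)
Your plan has a genuine gap at its core, and it also misses what the paper identifies as the key structural reduction.

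First, the essential difficulty you flag — establishing a stability theorem asserting that every near-extremal $F$-free $r$-graph is $o(n^r)$-close to some bipartite parity construction $\mathcal{B}(A,S)$, \emph{uniformly over all three-edge $F$} — is not a technical detail to be deferred: it is the entire problem in a different guise, and it is far from clear that the statement is even true in the form you propose. Frankl's and Keevash--Sudakov's stability arguments for $\mathbb{T}_{2k}$ exploit the very specific intersection pattern ($|E_i \cap E_j| = k$, no triple intersection) of the expanded triangle; for a generic three-edge $F$ there is no reason to expect a near-extremal construction to be a bipartite parity hypergraph at all, and you would effectively need to redo the whole extremal analysis for each intersection profile. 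You cannot take for granted the conclusion of a stability theorem whose conclusion is the claim you are trying to establish. Moreover, the concluding Diophantine analysis of admissible $S$ is only sketched, and by your own admission the divisibility argument breaks for composite odd $r$ such as $r=15$, where the $\gcd$ of the interior binomial coefficients is $1$; the "ad hoc embedding constructions" you invoke depend on $F$ and are not supplied.

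The paper avoids all of this by a reduction that is absent from your proposal. It first classifies three-edge $r$-graphs with minimum degree $\ge 2$ and shows they are exactly the suspensions $\hat{\mathbb{T}}_{2i}^{r}$ (Lemma~\ref{LEMMA:describe-T2}), and it shows via a vertex-merging homomorphism that every remaining three-edge $r$-graph maps into one of these (so Lemma~\ref{LEMMA:blowup-lemma} bounds its Tur\'an density by theirs). Combined with the suspension inequality $\pi(\hat F^{r+1}) \le \pi(\hat F^{r})$ (Lemma~\ref{LEMMA:suspension-Turan-density}), this reduces the whole theorem to proving $\pi(\hat{\mathbb{T}}_{2k}^{2k+1}) < 1/2$ for each fixed $k$. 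That single case is then handled by observing that in a $\hat{\mathbb{T}}_{2k}^{2k+1}$-free hypergraph, every vertex link is $\mathbb{T}_{2k}$-free, so the \emph{existing} Keevash--Sudakov stability theorem for $\mathbb{T}_{2k}$ applies to the links; a pairwise alignment argument on the resulting bipartitions yields the contradiction. No new stability theorem is needed and no case analysis over $S$ ever arises. In short, your proposal attempts a direct generalization of the stability method where the paper instead reduces to the one family where that method is already available — and without that reduction the stability step you rely on remains unestablished.
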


Let us remark that determining the exact value of $\gamma(r)$ for odd $r$ remains a very challenging problem. 
Indeed, for $r=3$, as observed by Keevash, results of Bollob\'{a}s~\cite{Bo74} and Frankl--F\"{u}redi~\cite{FF84} imply that $\gamma(3) = \pi(K_{4}^{3-})$, where $K_{4}^{3-}$ denotes the $3$-graph on four vertices with three edges. 
However, determining $\pi(K_{4}^{3-})$ remains an open problem, with the current best upper bound $\pi(K_4^{3-}) \le 0.2871\ldots$ established by Baber--Talbot~\cite{BT11} using the computer-assisted Flag Algebra machinery of Razborov~\cite{Raz07}.

Theorem~\ref{THM:odd-Mantel}, together with some additional arguments, yields the following corollary, which shows that the expanded triangles are the ``worst'' three-edge hypergraphs. 
\begin{corollary}\label{CORO:even-Mantel}
    Let $r$ be a positive even integer.
    Suppose that $F$ is an $r$-graph with exactly three edges such that $F \not\cong \mathbb{T}_{r}$.
    Then $\pi(F) < 1/2$.
\end{corollary}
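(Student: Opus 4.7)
My plan is to split on whether the three edges of $F=\{e_1,e_2,e_3\}$ share a common vertex, and in each case appeal to Theorem~\ref{THM:odd-Mantel}.

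\emph{Case~1: $e_1\cap e_2\cap e_3\ne\emptyset$.} Fix $v^*\in e_1\cap e_2\cap e_3$ and let $F^-\coloneqq\{e_1\setminus\{v^*\},e_2\setminus\{v^*\},e_3\setminus\{v^*\}\}$, the link of $F$ at $v^*$; this is a three-edge $(r-1)$-graph. Since $r$ is even, $r-1$ is odd, so Theorem~\ref{THM:odd-Mantel} gives $\pi(F^-)\le\gamma(r-1)<1/2$. For any $F$-free $r$-graph $\mathcal H$ on $n$ vertices the vertex link $L_u(\mathcal H)$ must be $F^-$-free (otherwise $u$ plays the role of $v^*$ in a copy of $F$), and the standard averaging
\[
r\,|\mathcal H|=\sum_{u\in V(\mathcal H)}|L_u(\mathcal H)|\le n\,\mathrm{ex}(n-1,F^-)
\]
then yields $\pi(F)\le\pi(F^-)<1/2$.

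\emph{Case~2: $e_1\cap e_2\cap e_3=\emptyset$ and $F\not\cong\mathbb{T}_r$.} A short structural check: from the identity $|e_i|=|e_i\cap e_j|+|e_i\cap e_k|+|e_i\setminus(e_j\cup e_k)|=r$ (using the empty triple intersection), if every vertex of $V(F)$ lies in at least two of the three edges (so each private part $e_i\setminus(e_j\cup e_k)$ is empty), the three resulting identities $|e_i\cap e_j|+|e_i\cap e_k|=r$ force $|e_i\cap e_j|=r/2$ for all pairs, hence $F\cong\mathbb{T}_r$. So under our hypothesis some vertex $v^*$ lies in exactly one edge, say $v^*\in e_1\setminus(e_2\cup e_3)$. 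My plan then is to combine Theorem~\ref{THM:odd-Mantel} with a stability-type input for Sidorenko's bound: any $F$-free $r$-graph of density close to $1/2$ must be structurally close to a parity hypergraph $\mathcal P_A\coloneqq\{e\in\binom{[n]}{r}:|e\cap A|\text{ odd}\}$ for some $A\subseteq[n]$. The key observation is that $F\not\cong\mathbb{T}_r$ is equivalent to $e_1\triangle e_2\triangle e_3\ne\emptyset$; a straightforward random-embedding count then shows $\mathcal P_A$ contains many copies of $F$ for large $n$, which contradicts $F$-freeness and forces $\pi(F)<1/2$.

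\emph{Main obstacle.} Case~2 is the hard part. With no vertex common to all three edges of $F$, the single-vertex link reduction that drives Case~1 does not produce an $(r-1)$-graph constraint on $L_u(\mathcal H)$ into which Theorem~\ref{THM:odd-Mantel} can be fed as a black box. The bulk of the work would lie in establishing the stability statement — a refinement of Sidorenko's inequality pinning down near-extremal $F$-free hypergraphs as close to parity constructions — and verifying that the parity construction embeds every three-edge $F\not\cong\mathbb{T}_r$. A fallback plan, if a clean stability theorem proves unwieldy, is to iterate link arguments through several auxiliary $(r-1)$-graphs (one per vertex-role in a potential copy of $F$) and combine them by convexity, though it is not obvious that such an averaging strictly beats $1/2$ without extra structural input.
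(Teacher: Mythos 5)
Your Case~1 is correct: when all three edges share a vertex $v^*$, the link reduction gives an $(r-1)$-graph $F^-$ with three edges, Theorem~\ref{THM:odd-Mantel} bounds $\pi(F^-)<1/2$ since $r-1$ is odd, and the averaging identity carries this over to $\pi(F)\le\pi(F^-)<1/2$. This is precisely the content of Lemma~\ref{LEMMA:suspension-Turan-density} and matches what the paper does for the case $F\in\mathcal{T}_2^r$.

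Your Case~2, however, has a genuine gap, which you candidly flag yourself. You correctly deduce that if $F\not\cong\mathbb{T}_r$ and no vertex is in all three edges, then some vertex $v^*$ has degree one; and you are also right that $\mathcal{P}_A$ contains $F$ whenever $e_1\triangle e_2\triangle e_3\neq\emptyset$ (the three parity constraints are then linearly independent over $\mathbb{F}_2$). But the pivot you need — that any $F$-free $r$-graph with density near $1/2$ is structurally close to a parity construction $\mathcal{P}_A$ — is a stability statement for Sidorenko's theorem that is \emph{not} available as a black box, and establishing it for an arbitrary three-edge $F$ would be a substantial project in its own right (it would, among other things, already imply the theorem you are trying to prove). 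Your fallback "iterate link arguments and combine by convexity" is also unlikely to close the gap without additional structural input, as you note.

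The missing idea is much simpler: instead of stability, use a homomorphism to push the degree-$1$ vertex onto another vertex so that Case~1 applies. Concretely, if $v^*\in e_1$ has degree $1$, pick a vertex $y$ not in $e_1$, and map $v^*\mapsto y$ while fixing every other vertex; this is a homomorphism from $F$ onto the three-edge $r$-graph $\tilde F$ with $e_1$ replaced by $(e_1\setminus\{v^*\})\cup\{y\}$, so $\pi(F)\le\pi(\tilde F)$ by Lemma~\ref{LEMMA:blowup-lemma}. The only subtlety is that after such contractions you must land on something with a degree-$3$ vertex rather than on $\mathbb{T}_r$. This is exactly what the paper's Lemma~\ref{LEMMA:describe-T2-strengthen} and Claim~\ref{CLAIM:describe-T2-strengthen} arrange: when $\Delta(F)=2$, one can always select a degree-$1$ vertex $x$ and a degree-$2$ vertex $y$ lying in disjoint edges, so the contraction raises the maximum degree to $3$. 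Once $\tilde F$ has a vertex $x$ of degree $3$, it is the $r$-suspension of the $(r-1)$-graph $\tilde F - x$, and Lemma~\ref{LEMMA:suspension-Turan-density} together with Theorem~\ref{THM:odd-Mantel} applied to $\tilde F - x$ (with $r-1$ odd) gives $\pi(\tilde F)\le\pi(\tilde F - x)<1/2$, which is just your Case~1 again. This replaces the entire stability machinery with an elementary two-line reduction.
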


This paper is organized as follows. 
Section~\ref{SEC:prelim} introduces the necessary definitions and preliminary results. 
In Section~\ref{SEC:Proof}, we present the proof of Theorem~\ref{THM:odd-Mantel}.
Finally, Section~\ref{SEC:Remarks} contains some concluding remarks. 

\section{Preliminaries}\label{SEC:prelim}
In this section, we present some definitions and preliminary results. 

Let $r \ge 2$ be an integer and $\mathcal{H}$ be an $r$-graph. 
For every vertex $v \in V(\mathcal{H})$, the \emph{link} of $v$ in $\mathcal{H}$ is defined as 
\begin{align*}
    L_{\mathcal{H}}(v)
    \coloneq \left\{S \in \tbinom{V(\mathcal{H}) \setminus \{v\}}{r-1} \colon S \cup \{v\} \in \mathcal{H} \right\}.
\end{align*}
%
The \emph{degree} of $v$ is given by $d_{\mathcal{H}}(v) \coloneqq |L_{\mathcal{H}}(v)|$.  
We use $\delta(\mathcal{H})$, $\Delta(\mathcal{H})$, and $d(\mathcal{H})$ to denote the \emph{minimum}, \emph{maximum}, and \emph{average} degrees of $\mathcal{H}$, respectively. 
The subscript $\mathcal{H}$ will be omitted when it is clear from the context.

Let $r \ge s \ge 2$ be integers. 
For an $s$-graph $F$, the \emph{$r$-suspension} of $F$, denoted by $\hat{F}^{r}$, is the $r$-graph given by 
\begin{align*}
    \hat{F}^{r}
    \coloneqq \left\{ C \cup e \colon e \in F \right\},  
\end{align*}
where $C$ is an $(r-s)$-set disjoint from $V(F)$. 

It is clear from the definition that for integers $r_2 > r_1 \ge s$, the $r_2$-graph $\hat{G}^{r_2}$ can be viewed as the $(r_2-r_1)$-suspension of the $r_1$-graph $\hat{G}^{r_1}$. 

We have the following simple lemma on the Tur\'{a}n density of suspensions. 

\begin{lemma}\label{LEMMA:suspension-Turan-density}
    Let $r \ge s \ge 2$ be integers and $F$ be an $s$-graph. 
    Then $\pi(\hat{F}^{r}) \le \pi(F)$. 
    Consequently, $\pi(\hat{F}^{r_2}) \le \pi(\hat{F}^{r_1})$ for all integers $r_2 \ge r_1 \ge s$. 
\end{lemma}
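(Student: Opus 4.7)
The plan is to prove the first inequality by a standard link-averaging argument, and then derive the monotonicity statement as a formal consequence. Let $\mathcal{H}$ be an $\hat{F}^{r}$-free $r$-graph on $n$ vertices. The key idea is to extend the notion of link from vertices to $(r-s)$-sets: for every $C \in \binom{V(\mathcal{H})}{r-s}$, define
\begin{align*}
    L_{\mathcal{H}}(C)
    \coloneqq \left\{S \in \tbinom{V(\mathcal{H}) \setminus C}{s} \colon S \cup C \in \mathcal{H}\right\}.
\end{align*}
Since $L_{\mathcal{H}}(C)$ lives on $V(\mathcal{H}) \setminus C$, any copy of $F$ inside $L_{\mathcal{H}}(C)$ is automatically vertex-disjoint from $C$, so it would combine with $C$ to form a copy of $\hat{F}^{r}$ in $\mathcal{H}$. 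Because $\mathcal{H}$ is $\hat{F}^{r}$-free, each such $L_{\mathcal{H}}(C)$ must be $F$-free, giving the uniform bound $|L_{\mathcal{H}}(C)| \le \mathrm{ex}(n-r+s,F)$.

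Next I would double-count incidences between edges of $\mathcal{H}$ and their $(r-s)$-subsets, which yields the identity $\sum_{C} |L_{\mathcal{H}}(C)| = \binom{r}{r-s}\,|\mathcal{H}|$. Combining this with the link bound and the elementary identity $\binom{n}{r-s}\binom{n-r+s}{s} = \binom{r}{s}\binom{n}{r}$, I obtain
\begin{align*}
    |\mathcal{H}|
    \le \frac{\binom{n}{r-s}}{\binom{r}{r-s}} \cdot \mathrm{ex}(n-r+s,F)
    = \frac{\binom{n}{r}}{\binom{r}{s}} \cdot \frac{\mathrm{ex}(n-r+s,F)}{\binom{n-r+s}{s}} \cdot \binom{r}{s}.
\end{align*}
Dividing by $\binom{n}{r}$ and letting $n \to \infty$ (so that $\mathrm{ex}(n-r+s,F)/\binom{n-r+s}{s} \to \pi(F)$) will give $\pi(\hat{F}^{r}) \le \pi(F)$, as required.

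For the consequence, I would observe that $\hat{F}^{r_2}$ is, up to isomorphism, the $r_2$-suspension of the $r_1$-graph $\hat{F}^{r_1}$: adjoining a fresh $(r_2 - s)$-set to every edge of $F$ is the same as first adjoining an $(r_1-s)$-set to obtain $\hat{F}^{r_1}$ and then adjoining the remaining disjoint $(r_2 - r_1)$-set. Applying the already-proved first inequality with $\hat{F}^{r_1}$ in place of $F$ (and $r_1$ in place of $s$) then yields $\pi(\hat{F}^{r_2}) \le \pi(\hat{F}^{r_1})$.

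I do not anticipate any real obstacle; the argument is a textbook supersaturation-free link-count. The only point that needs a little care is the disjointness built into the definition of $L_{\mathcal{H}}(C)$, which is precisely what ensures that an $F$-copy in the link always lifts to an $\hat{F}^{r}$-copy in $\mathcal{H}$.
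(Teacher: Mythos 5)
Your proof is correct and rests on the same link-averaging idea as the paper's. The only difference is stylistic: the paper reduces to the case $r=s+1$ via vertex links and implicitly iterates through the suspension composition, whereas you handle general $r$ in a single step by defining the link of an $(r-s)$-set $C$ and double-counting incidences; both routes give the same bound $\mathrm{ex}(n,\hat F^r)/\binom{n}{r}\le \mathrm{ex}(n-r+s,F)/\binom{n-r+s}{s}$.
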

\begin{proof}[Proof of Lemma~\ref{LEMMA:suspension-Turan-density}]
    It suffices to consider the case $r = s+1$. 
    Let $\mathcal{H}$ be an $\hat{F}^{s+1}$-free $(s+1)$-graph on $n$ vertices with exactly $\mathrm{ex}(n,\hat{F}^{s+1})$ edges. 
    It follows from the definition of suspension that for every vertex $v \in V(\mathcal{H})$, the link $L_{\mathcal{H}}(v)$ is $F$-free. 
    Therefore, $d_{\mathcal{H}}(v) \le \mathrm{ex}(n-1,F)$, and consequently, 
    \begin{align*}
        \frac{\mathrm{ex}(n,\hat{F}^{s+1})}{\binom{n}{s+1}}
        = \frac{|\mathcal{H}|}{\binom{n}{s+1}}
        = \frac{1}{s+1} \frac{\sum_{v\in V(\mathcal{H})} d_{\mathcal{H}}(v)}{\binom{n}{s+1}}
        \le \frac{1}{s+1} \frac{n \cdot \mathrm{ex}(n-1,F)}{\binom{n}{s+1}}
        = \frac{\mathrm{ex}(n-1,F)}{\binom{n-1}{s}}. 
    \end{align*}
    Letting $n \to \infty$ in the above, we obtain $\pi(\hat{F}^{s+1}) \le \pi(F)$. 
\end{proof}

Given two $r$-graphs $F_{1}$ and $F_{2}$, a map $\phi \colon V(F_{1}) \to V(F_{2})$ is a \emph{homomorphism} from $F_{1}$ to $F_{2}$ if $\phi(e) \in F_{2}$ for every $e \in F_{1}$. 
We will use the following standard result in Tur\'{a}n problems, which follows from a result of Erd\H{o}s--Simonovits~\cite{ES83} (see also~{\cite[Theorem~2.2]{Kee11}}). 

\begin{lemma}\label{LEMMA:blowup-lemma}
    Let $F_{1}$ and $F_{2}$ be two $r$-graphs.
    Suppose that there exists a homomorphism from $F_{1}$ to $F_{2}$. 
    Then $\pi(F_1) \leq \pi(F_{2})$. 
\end{lemma}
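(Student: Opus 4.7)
The plan is to reduce the statement to the Erd\H{o}s--Simonovits blowup theorem, which asserts that for every $r$-graph $F$ and every integer $t \ge 1$ the $t$-blowup $F(t)$ of $F$ satisfies $\pi(F(t)) = \pi(F)$. Granting this, the inequality $\pi(F_1) \le \pi(F_2)$ will follow once $F_1$ is exhibited as a subgraph of some blowup of $F_2$, since subgraph inclusion is monotone in Tur\'{a}n density.

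Concretely, I would fix a homomorphism $\phi \colon V(F_1) \to V(F_2)$ and set $t \coloneqq |V(F_1)|$. Consider the $t$-blowup $F_2(t)$, obtained from $F_2$ by replacing each vertex $v \in V(F_2)$ with a set $B_v$ of $t$ new vertices and declaring $\{u_1, \dots, u_r\}$ to be an edge whenever there exist distinct $v_1, \dots, v_r$ with $\{v_1, \dots, v_r\} \in F_2$ and $u_i \in B_{v_i}$ for each $i$. I would then send each $w \in V(F_1)$ to a distinct vertex of $B_{\phi(w)}$, which is possible because $|B_{\phi(w)}| = t \ge |\phi^{-1}(\phi(w))|$. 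For an edge $e = \{w_1, \dots, w_r\} \in F_1$, the image $\phi(e) = \{\phi(w_1), \dots, \phi(w_r)\}$ belongs to $F_2$ and has cardinality $r$, so the $\phi(w_i)$ are pairwise distinct; hence the embedded copy of $e$ is an edge of $F_2(t)$. This proves $F_1 \subseteq F_2(t)$, and combining with the blowup theorem yields $\pi(F_1) \le \pi(F_2(t)) = \pi(F_2)$.

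There is essentially no obstacle here beyond invoking the Erd\H{o}s--Simonovits blowup theorem; the embedding step above is entirely routine. Accordingly, the write-up can be very short: display the blowup $F_2(t)$, describe the embedding $w \mapsto $ a chosen element of $B_{\phi(w)}$, verify that edges are preserved, and cite~\cite{ES83} (see also~\cite{Kee11}) for the equality $\pi(F_2(t)) = \pi(F_2)$.
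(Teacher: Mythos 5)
Your argument is correct and is exactly the standard route the paper has in mind: the paper does not write out a proof at all, but simply cites Erd\H{o}s--Simonovits~\cite{ES83} and Keevash~\cite[Theorem~2.2]{Kee11} for this fact, which is the blowup theorem you invoke. You go a little further by making the embedding of $F_1$ into the $t$-blowup $F_2(t)$ explicit, but this is precisely the content those references package up, so the two approaches coincide.
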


Recall the definition of $\mathbb{T}_{2k}$ from the previous section. 
Let $V_1 \cup V_2 = V$ be a partition of a vertex set $V$. 
Denote by $\mathbb{B}^{2k}[V_1, V_2]$ the collection of all $2k$-subsets of $V$ that intersect both $V_1$ and $V_2$ in an odd number of vertices. 
We call $\mathbb{B}^{2k}[V_1, V_2]$ the \emph{complete odd-bipartite} $2k$-graph with parts $V_1$ and $V_2$. 
Denote by $\mathbb{B}^{2k}_{n}$ the complete odd-bipartite $2k$-graph on $n$ vertices with the maximum number of edges. 

Confirming a conjecture of Frankl~\cite{Fra90}, Keevash--Sudakov~\cite{KS05} proved that for large $n$, the unique extremal construction for $\mathbb{T}_{2k}$ is $\mathbb{B}_{n}^{2k}$. Moreover, they established the following stability theorem.  

\begin{theorem}[{\cite[Theorem 3.4]{KS05}}]\label{THM:T2k-stability}
    Let $k \ge 1$ be an integer. 
    For every $\varepsilon >0$, there exist $\delta_{\ref{THM:T2k-stability}} = \delta_{\ref{THM:T2k-stability}}(k, \varepsilon) >0$ and $N_{\ref{THM:T2k-stability}} = N_{\ref{THM:T2k-stability}}(k, \varepsilon)$ such that the following holds for every $n \ge N_{\ref{THM:T2k-stability}}$. 
    Suppose that $\mathcal{H}$ is a $\mathbb{T}_{2k}$-free $2k$-graph on $n$ vertices with $|\mathcal{H}| \ge \left(\tfrac{1}{2} - \delta_{\ref{THM:T2k-stability}}\right)\binom{n}{2k}$. 
    Then there exists a partition $V(\mathcal{H}) = V_1 \cup V_2$ with $\left| |V_1| - |V_2| \right| \le 1$ such that 
    \begin{align*}
        \left| \mathcal{H} \triangle \mathbb{B}^{2k}[V_1, V_2] \right| \le \varepsilon n^{2k}. 
    \end{align*}
\end{theorem}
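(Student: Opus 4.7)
The plan is to prove the stability claim in three phases: a cleanup to a near-regular subhypergraph, extraction of a candidate bipartition $V_1 \cup V_2$, and a supersaturation argument bounding the symmetric difference. The most technical step by far is the middle one.

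\textbf{Phase 1: Cleanup.} Given $\mathcal{H}$ with $|\mathcal{H}| \ge (\tfrac{1}{2} - \delta)\binom{n}{2k}$, I would iteratively delete any vertex $v$ with $d_{\mathcal{H}}(v) < (\tfrac{1}{2} - \sqrt{\delta})\binom{n-1}{2k-1}$. A standard calculation shows that the edge density does not decrease under such deletions, so the process terminates after removing at most $O(\sqrt{\delta}\, n)$ vertices and losing at most $O(\sqrt{\delta})\binom{n}{2k}$ edges. The result is a $\mathbb{T}_{2k}$-free subhypergraph $\mathcal{H}'$ on $n' \ge (1 - O(\sqrt{\delta}))n$ vertices with $\delta(\mathcal{H}') \ge (\tfrac{1}{2} - O(\sqrt{\delta}))\binom{n'-1}{2k-1}$. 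A good bipartition of $V(\mathcal{H}')$ can be extended to $V(\mathcal{H})$ by placing each deleted vertex arbitrarily into $V_1$ or $V_2$, at a symmetric-difference cost of $O(\sqrt{\delta}\, n^{2k})$; taking $\delta$ small relative to $\varepsilon$ absorbs this loss.

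\textbf{Phase 2: Extracting the bipartition.} This is the core step. The strategy is to exploit the fact that $\pi(\mathbb{T}_{2k}) = 1/2$ is attained asymptotically only by $\mathbb{B}^{2k}_{n}$ and to read off a canonical bipartition from codegree statistics of $\mathcal{H}'$. Concretely, for each pair $\{u, v\} \subseteq V(\mathcal{H}')$, the codegree $d_{\mathcal{H}'}(\{u, v\})$ predicted by $\mathbb{B}^{2k}[V_1, V_2]$ takes one of two distinctive values depending on whether $u, v$ lie on the same side of the partition or on opposite sides. Forming an auxiliary graph $G$ whose edges are the pairs whose observed codegree matches the ``opposite sides'' value, I would argue that $G$ is essentially a complete balanced bipartite graph, yielding a partition $V(\mathcal{H}') = V_1 \cup V_2$ with $\left| |V_1| - |V_2| \right| = O(\varepsilon n)$. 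The $\mathbb{T}_{2k}$-free hypothesis enters to rule out a third cluster: a vertex incompatible with both sides would, together with the high minimum degree, force many expanded triangles.

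\textbf{Phase 3: Symmetric difference and rebalancing.} Set $B^{+} = \mathcal{H}' \setminus \mathbb{B}^{2k}[V_1, V_2]$ and $B^{-} = \mathbb{B}^{2k}[V_1, V_2] \setminus \mathcal{H}'$. A supersaturation argument bounds $|B^{+}|$: each $e \in B^{+}$ has even intersection with $V_1$, and when paired with two further edges drawn from the $(\tfrac{1}{2} - o(1))\binom{n'}{2k}$ edges of $\mathbb{B}^{2k}[V_1, V_2] \cap \mathcal{H}'$, it sits in $\Omega(n^{2k})$ potential copies of $\mathbb{T}_{2k}$; the $\mathbb{T}_{2k}$-free hypothesis therefore forces $|B^{+}| = o(n^{2k})$. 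The bound $|B^{-}| = o(n^{2k})$ then follows from $|\mathcal{H}'| \ge (\tfrac{1}{2} - o(1))\binom{n'}{2k}$ together with $|B^{+}| = o(n^{2k})$ and $|\mathbb{B}^{2k}[V_1, V_2]| \le \tfrac{1}{2}\binom{n'}{2k}$. Finally, because $|\mathbb{B}^{2k}[V_1, V_2]|$ is maximized for balanced parts, the near-extremal edge count forces $\left| |V_1| - |V_2| \right| = O(\varepsilon n)$, and moving $O(\varepsilon n)$ vertices between $V_1$ and $V_2$ achieves $\left| |V_1| - |V_2| \right| \le 1$ at an additional cost of $O(\varepsilon n^{2k})$.

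\textbf{Main obstacle.} The hardest step is Phase 2: producing the partition itself. For $k = 1$, the bipartition can be read off directly from any near-maximum independent set via Erd\H{o}s--Simonovits stability for $K_3$, but for $k \ge 2$ there is no analogous direct handle at the level of a single vertex. Carrying out the codegree-clustering argument rigorously requires showing that the auxiliary graph $G$ has exactly two dominant clusters and only $o(n)$ outlier vertices, and this is where I expect the bulk of the technical work to lie.
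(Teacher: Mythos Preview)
The paper does not prove this theorem at all: it is quoted verbatim as \cite[Theorem~3.4]{KS05} (Keevash--Sudakov) and used as a black box in the proof of Theorem~\ref{THM:suspension-expanded-triangle}. There is therefore no ``paper's own proof'' to compare your proposal against.

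Your three-phase outline is a reasonable high-level plan for a stability result of this type, and Phases~1 and~3 are standard. You are right that Phase~2 is where the real content lies, and your sketch there is genuinely incomplete: the codegree-clustering heuristic (``pairs whose codegree matches the opposite-sides value form a near-complete bipartite graph'') is plausible but you have not explained why the auxiliary graph $G$ cannot, say, look like a blowup of $C_5$ or some other non-bipartite structure with the right edge density. Ruling this out is exactly where the $\mathbb{T}_{2k}$-freeness must be used in a sharp way, and your sentence ``a vertex incompatible with both sides would \dots\ force many expanded triangles'' presupposes that two sides already exist. The actual Keevash--Sudakov argument proceeds differently, working through link hypergraphs and an inductive/iterative refinement rather than a single codegree graph; if you want to carry your approach through, the missing ingredient is a concrete mechanism that forces $G$ to have exactly two clusters before you have a candidate bipartition in hand.
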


Here, we use $A \triangle B$ to denote the symmetric difference of two sets $A$ and $B$. 

\section{Proof of Theorem~\ref{THM:odd-Mantel}}\label{SEC:Proof}
In this section, we present the proof of Theorem~\ref{THM:odd-Mantel}. 

\subsection{Reduction}\label{SUBSEC:Reduction}
In this subsection, we show that Theorem~\ref{THM:odd-Mantel} reduces to the following result. 

\begin{theorem}\label{THM:suspension-expanded-triangle}
    For every integer $k \ge 1$, we have $\pi(\hat{\mathbb{T}}_{2k}^{2k+1}) < 1/2$. 
\end{theorem}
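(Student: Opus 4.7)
The plan is a stability-based argument leading to a parity contradiction. Suppose for contradiction that $\pi(\hat{\mathbb{T}}_{2k}^{2k+1}) = 1/2$; then for every sufficiently small $\varepsilon > 0$ and all sufficiently large $n$, there is a $\hat{\mathbb{T}}_{2k}^{2k+1}$-free $(2k+1)$-graph $\mathcal{H}$ on $n$ vertices with $|\mathcal{H}| \ge (1/2 - \varepsilon)\binom{n}{2k+1}$. Any copy of $\mathbb{T}_{2k}$ in a link $L_{\mathcal{H}}(v)$ together with $v$ would produce a $\hat{\mathbb{T}}_{2k}^{2k+1}$, so every link is $\mathbb{T}_{2k}$-free, and $|L_{\mathcal{H}}(v)| \le (1/2 + o(1))\binom{n-1}{2k}$ by Frankl's theorem. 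A standard averaging argument based on $\sum_{v} |L_{\mathcal{H}}(v)| = (2k+1)|\mathcal{H}|$ yields that all but $O(\sqrt{\varepsilon})\,n$ vertices $v$ are \emph{good}, meaning $|L_{\mathcal{H}}(v)| \ge (1/2 - \sqrt{\varepsilon})\binom{n-1}{2k}$. Applying Theorem~\ref{THM:T2k-stability} to each good $v$ produces a nearly balanced partition $V \setminus \{v\} = A_v \cup B_v$ with $|L_{\mathcal{H}}(v) \triangle \mathbb{B}^{2k}[A_v, B_v]| = o(n^{2k})$.

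The heart of the argument is to glue these local partitions into one global bipartition. For good $v, w$, set $d(v,w) := |\{T \in \binom{V \setminus \{v,w\}}{2k-1} : \{v,w\} \cup T \in \mathcal{H}\}|$. The identity $\sum_{w \ne v} d(v,w) = 2k\,|L_{\mathcal{H}}(v)|$ implies that most good $w$ satisfy $d(v,w) \ge (1/2 - o(1))\binom{n-2}{2k-1}$. On the other hand, an edge $\{v,w\} \cup T \in \mathcal{H}$ imposes, up to an $o(n^{2k-1})$ average defect from dividing the stability errors of $L_{\mathcal{H}}(v)$ and $L_{\mathcal{H}}(w)$ among the $n-1$ codegrees, one parity condition on $|T \cap A_v|$ and one on $|T \cap A_w|$: specifically $|T \cap A_v| \equiv p_v \pmod{2}$ and $|T \cap A_w| \equiv p_w \pmod{2}$, where $p_v \in \{0,1\}$ is determined by whether $w \in A_v$ or $w \in B_v$, and analogously for $p_w$. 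If the symmetric difference of $A_v$ and $A_w$ restricted to $V \setminus \{v,w\}$ exceeds some fixed $\eta n$, these two parity conditions are essentially independent for a uniformly random such $T$, forcing $d(v,w) \le (1/4 + o(1))\binom{n-2}{2k-1}$, which contradicts the lower bound once $\varepsilon$ is small enough. Hence for each good $v$, the unordered partition $\{A_v, B_v\}$ agrees with $\{A_w, B_w\}$ up to labeling and an $o(n)$ discrepancy for almost every good $w$. A standard clustering then extracts a global bipartition $V = A \cup B$ with $||A| - |B|| \le 2$ such that $|L_{\mathcal{H}}(v) \triangle \mathbb{B}^{2k}[A \setminus \{v\}, B]| = o(n^{2k})$ for every good $v \in A$, and symmetrically for good $v \in B$.

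With the global partition in hand, a direct parity clash closes the argument. Any edge $E \in \mathcal{H}$ lying in the $\mathbb{B}^{2k}$-part of the link of a good vertex $v \in E \cap A$ satisfies $|E \cap A|$ even and $|E \cap B|$ odd, whereas fitting at a good $v' \in E \cap B$ forces the opposite parities; these cannot both occur, so every such $E$ must violate the $\mathbb{B}^{2k}$ structure at some good vertex. Summing the stability defects bounds the number of edges with good vertices on both sides by $\sum_{v \in G} |L_{\mathcal{H}}(v) \setminus \mathbb{B}^{2k}[A_v, B_v]| = o(n^{2k+1})$. The remaining edges either lie entirely in $A$ or in $B$ (contributing at most $2 \binom{\lceil n/2 \rceil}{2k+1}$), or cross $A$--$B$ while meeting one of the sides only in bad vertices (contributing $O(\sqrt{\varepsilon})\binom{n}{2k+1}$, since $|V \setminus G| = O(\sqrt{\varepsilon})n$). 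Altogether,
\[
|\mathcal{H}| \le \left(\frac{1}{4^{k}} + o(1) + O(\sqrt{\varepsilon})\right)\binom{n}{2k+1},
\]
which for $k \ge 1$ and $\varepsilon$ sufficiently small contradicts $|\mathcal{H}| \ge (1/2 - \varepsilon)\binom{n}{2k+1}$. The main obstacle is the partition-consistency step: converting the per-vertex stability bounds into a single global bipartition with uniform $o(n)$ control requires careful bookkeeping of the label ambiguity between $A_v$ and $B_v$ and of the small fraction of pairs where the codegree estimate degrades (e.g.\ when $|A_v \cap A_w|$ or $|A_v \setminus A_w|$ is very small).
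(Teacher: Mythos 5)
Your overall strategy --- use the observation that every link is $\mathbb{T}_{2k}$-free, invoke the Keevash--Sudakov stability theorem to give each link a nearly odd-bipartite structure, synchronize the partitions across vertices, and then exploit the parity incompatibility that a vertex in $A$ and a vertex in $B$ impose on a common edge --- is exactly the strategy of the paper's proof. The final parity clash (an edge with good vertices on both sides cannot respect the $\mathbb{B}^{2k}$ structure at all of them simultaneously) is precisely the mechanism the paper uses; the paper localizes it at a single vertex $y \in W_1$ by exhibiting many edges of $L(y)$ contained entirely in $W_2$, whereas you phrase it as a global edge count, but these are two presentations of the same contradiction.

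Where you genuinely diverge from the paper is in the partition-consistency step, and this is also where your sketch has real gaps. The paper's Claim~3.5 shows that if $\mathcal{P}_x$ and $\mathcal{P}_y$ are far apart then all four cells $V_x^i \cap V_y^j$ are large, and then directly embeds a copy of $\hat{\mathbb{T}}_{2k}^{2k+1}$ in $\mathcal{H}$ by randomly sampling from those cells and using the stability structure of two links; this uses the $\hat{\mathbb{T}}$-free hypothesis head-on and requires no quantitative codegree estimates. You instead attempt a codegree argument, but two of its steps are stated too strongly. First, the identity $\sum_{w} d(v,w) = 2k\,|L_{\mathcal{H}}(v)|$ by itself does \emph{not} imply that most $w$ have $d(v,w) \ge (1/2 - o(1))\binom{n-2}{2k-1}$: with only the trivial upper bound $d(v,w)\le\binom{n-2}{2k-1}$, a $(1/2)$-average is consistent with half the codegrees being near the maximum and half near zero. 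The correct justification is the one you gesture at with "dividing the stability errors": every vertex has degree about $\tfrac{1}{2}\binom{n-2}{2k-1}$ in $\mathbb{B}^{2k}[A_v,B_v]$, and since $|L(v)\triangle\mathbb{B}^{2k}[A_v,B_v]| \le \varepsilon n^{2k}$, only $o(n)$ vertices $w$ can have defect-degree $\Omega(n^{2k-1})$ (this is exactly the paper's set $Z_x$). Second, when $\mathrm{dist}(\mathcal{P}_v,\mathcal{P}_w) \ge \eta n$ the resulting codegree bound is \emph{not} $(1/4+o(1))\binom{n-2}{2k-1}$. Writing $X_v(T) = |T\cap A_v| \bmod 2$ and using the Fourier identity
\[
\mathbb{P}\bigl[X_v = p_v,\ X_w = p_w\bigr] \;\approx\; \frac{1 + (-1)^{p_v+p_w}\bigl(1 - 2\,|A_v\triangle A_w|/n\bigr)^{2k-1}}{4},
\]
one only gets an upper bound of roughly $\tfrac{1}{2} - c\eta$ for small $\eta$, not $\tfrac14$. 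That is still strictly less than $\tfrac12$, so the contradiction survives, but you then have to choose $\eta$ so that the stability error terms are $o(\eta)$ --- i.e., tune $\eta$ to a small power of $\varepsilon$, which is exactly the kind of bookkeeping the paper's exponent $\varepsilon^{1/8k}$ reflects. Both of these gaps are fixable, but as written they are gaps; the paper's embedding argument sidesteps them entirely.
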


For every integer $r \ge 2$, denote by $\mathcal{T}^{r}$ the collection of all $r$-graphs with exactly three edges. 
By removing isolated vertices if necessary, we may assume that $\delta(F) \ge 1$ for every $F \in \mathcal{T}^{r}$. 
We further partition $\mathcal{T}^{r}$ into the following two subfamilies: 
\begin{align*}
    \mathcal{T}_{1}^{r}
    \coloneqq \left\{F \in \mathcal{T}^{r} \colon \delta(F) = 1 \right\}
    \quad\text{and}\quad 
    \mathcal{T}_{2}^{r}
    \coloneqq \left\{F \in \mathcal{T}^{r} \colon \delta(F) \ge 2 \right\}. 
\end{align*}

\begin{lemma}\label{LEMMA:describe-T2}
    The following statements hold. 
    \begin{enumerate}[label=(\roman*)]
        \item\label{LEMMA:describe-T2-a} We have $\mathcal{T}_{2}^{r} = \big\{\hat{\mathbb{T}}_{2i}^{r} \colon 1 \le i \le \lfloor r/2 \rfloor \big\}$. 
        \item\label{LEMMA:describe-T2-b} For every $r$-graph $F_{1} \in \mathcal{T}_{1}^{r}$, there exists an $r$-graph $F_{2} \in \mathcal{T}_{2}^{r}$ such that there exists a homomorphism from $F_1$ to $F_2$.
    \end{enumerate}
\end{lemma}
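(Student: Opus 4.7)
My plan is to handle the two parts of the lemma separately.

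\textbf{Part (i).} Let $F \in \mathcal{T}_2^r$ have edges $e_1, e_2, e_3$. Since $\delta(F) \geq 2$, every vertex lies in at least two of the edges, so $V(F)$ is partitioned into $R_{123} := e_1 \cap e_2 \cap e_3$ and the three pairwise-only regions $R_{ij} := e_i \cap e_j \setminus e_k$ for $\{i,j,k\} = \{1,2,3\}$. Writing $d := |R_{123}|$ and $(a, b, c) := (|R_{12}|, |R_{23}|, |R_{13}|)$, the three equations $|e_i| = r$ give $a + c + d = a + b + d = b + c + d = r$; pairwise subtraction forces $a = b = c =: k$ and $d = r - 2k$. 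Distinctness of the three edges yields $k \geq 1$, and $d \geq 0$ yields $k \leq \lfloor r/2 \rfloor$. Matching the $R_{ij}$'s and $R_{123}$ with the pairwise-only regions and common set of $\hat{\mathbb{T}}_{2k}^r$ gives $F \cong \hat{\mathbb{T}}_{2k}^r$.

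\textbf{Part (ii).} Take $F_1 \in \mathcal{T}_1^r$ with the same partition notation and let $x_i := |e_i \setminus (e_j \cup e_k)|$, so $\sum_i x_i \geq 1$. Distinctness of the three edges gives $a + d, b + d, c + d \leq r - 1$. I will construct $F_2 \in \mathcal{T}_2^r$ and an explicit homomorphism $\phi \colon V(F_1) \to V(F_2)$. In the degenerate subcase $d = r - 1$, the distinctness bounds force $a = b = c = 0$, so $F_1$ is a sunflower with an $(r-1)$-core $D_1$ and three singleton petals $v_1, v_2, v_3$; I take $F_2 = \hat{\mathbb{T}}_2^r$, fix any edge $E \in F_2$, map $D_1$ bijectively onto any $(r-1)$-subset of $E$, and send all three $v_i$ to the remaining vertex of $E$, giving $\phi(e_i) = E$ for every $i$.

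In the main subcase $d \leq r - 2$, I may assume $a \leq b \leq c$. The relation $|e_3| = r$ gives $b + c + d \leq r$, and in particular $b \leq (r - d)/2 \leq \lfloor r/2 \rfloor$. Set $k := \max\{1, b\}$ and take $F_2 = \hat{\mathbb{T}}_{2k}^r$ with common set $D$, pairwise-only regions $S_{12}, S_{23}, S_{13}$ (each of size $k$), and edges $E_i = D \cup S_{ij} \cup S_{ik}$. The homomorphism is defined by injecting $R_{123}$ into $D$, injecting each $R_{ij}$ into $D \cup S_{ij}$, and mapping $R_i$ bijectively onto the residual slots of $E_i$; since $|e_i| = |E_i| = r$, injectivity on each edge then forces $\phi(e_i) = E_i$. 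The only nontrivial compatibility requirement is that $\phi(R_{123}), \phi(R_{12}), \phi(R_{13}), \phi(R_{23})$ be pairwise disjoint inside $D$ (each pair of these regions sits inside a common edge of $F_1$), which amounts to
\[
d + \max(0, a - k) + \max(0, b - k) + \max(0, c - k) \leq r - 2k.
\]
Since $a \leq b \leq k$, the first two $\max$-terms vanish, and the inequality reduces to $b + c + d \leq r$ when $b \geq 1$, to $c + d \leq r - 1$ when $b = 0 < c$, or to $d \leq r - 2$ when $b = c = 0$; each of these follows from the edge-size and distinctness bounds.

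The main obstacle is the allocation in the main subcase: the choice of $k$ as the middle element of $\{a, b, c\}$ (bumped up to $1$ if needed) is dictated precisely by the pairwise-disjointness inequality above, which then collapses into one of the standard bounds. Once $k$ is fixed, verifying that $\phi|_{e_i}$ is a bijection onto $E_i$ is a routine counting check using $|e_i| = r$ and the sizes of $D$ and the $S_{ij}$'s.
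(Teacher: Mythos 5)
Your proof is correct for both parts, but takes a genuinely different route from the paper's. For part~(i), you give a direct non-inductive argument: once $\delta(F)\ge 2$, you partition $V(F)$ into the seven regions (four non-empty), observe that the three equations $|e_i|=r$ force the three pairwise-only regions to have equal size $k$, and then $d=r-2k\ge 0$ and distinctness pin down $F\cong\hat{\mathbb{T}}_{2k}^{r}$. The paper instead inducts on $r$, peeling off a degree-$3$ vertex (which must exist in the odd case by a parity count of $\sum d(v)=3r$, and in the even case either exists or else every vertex has degree exactly~$2$). Your version avoids the induction and parity case-split entirely, at the cost of a slightly longer set-up; both are clean. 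For part~(ii), the paper performs an iterated ``shift'' of a single degree-$1$ vertex $x\in E_1$ onto some $y\in E_2\setminus E_1$, reducing the number of degree-$1$ vertices at each step and composing the induced collapsing maps, so the target $F_2\in\mathcal{T}_2^r$ emerges at the end of the process rather than being chosen up front. You instead pick the target $\hat{\mathbb{T}}_{2k}^r$ explicitly (with $k=\max\{1,b\}$ for the middle intersection size) and build the homomorphism region by region, verifying the packing inequality
$d+\sum_{ij}\max(0,|R_{ij}|-k)\le r-2k$
from the edge-size relations and distinctness bounds. Your approach is more constructive and gives an explicit $F_2$; the paper's is shorter and sidesteps the case analysis on $d$ and on which of $a,b,c$ is zero. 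One small note: the paper's shifting argument automatically handles the collapse-to-a-single-edge possibility uniformly, whereas you isolate the sunflower ($d=r-1$) case explicitly, which is also fine but worth flagging as the reason for your two subcases.
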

\begin{proof}[Proof of Lemma~\ref{LEMMA:describe-T2}]
    First, we present the proof of Lemma~\ref{LEMMA:describe-T2}~\ref{LEMMA:describe-T2-a}. 
    We proceed by induction on~$r$. 
    The base case $r \in \{2,3\}$ holds, since it is easy to see that the family $\mathcal{T}_2^{2}$ consists only of the single graph $K_3 = \hat{\mathbb{T}}_{2}^{2}$, and the family $\mathcal{T}_2^{3}$ consists only of the single $3$-graph $K_4^{3-} = \hat{\mathbb{T}}_{2}^{3}$. 
    
    Now assume that $r \ge 4$. 
    First, we consider the case where $r = 2s$ is even. 
    Fix an arbitrary $2s$-graph $F = \left\{E_1, E_2, E_3\right\} \in \mathcal{T}_2^{2s}$. 
    
    If every vertex of $F$ has degree exactly two, then it is easy to see that $|V(F)| = 3s$, and hence, 
    \begin{align*}
        |E_1 \cap E_2| 
        = 3s - \big( |E_2 \cap E_3| + |E_1 \cap E_3| \big) 
        = 3s - |E_3| 
        = s.  
    \end{align*}
    By symmetry, we also have $|E_{1} \cap E_{3}| = |E_{2} \cap E_{3}| = s$, which implies that $F \cong \mathbb{T}_{2s}$. 
    
    If $F$ contains a vertex $x$ of degree three, then $F$ is the $2s$-suspension of the $(2s-1)$-graph $F' \coloneqq F - x$. Since $F' \in \mathcal{T}_2^{2s-1}$, it follows from the induction hypothesis that $F' \cong \hat{\mathbb{T}}_{2i}^{2s-1}$ for some $1 \le i \le s-1$. 
    Therefore, $F \cong \hat{\mathbb{T}}_{2i}^{2s}$, as desired. 

    Now we consider the case where $r = 2s+1$ is odd.
    Fix an arbitrary $(2s+1)$-graph $F = \left\{E_1, E_2, E_3\right\} \in \mathcal{T}_2^{2s+1}$. 
    A simple double-counting argument shows that the sum of the degrees of the vertices in $F$ is $3(2s+1) = 6s+3$, which is odd. 
    Therefore, there exist a vertex $x$ of degree three in $F$. 
    Hence, $F$ is the $(2s+1)$-suspension of the $2s$-graph $F' \coloneqq F - x$.
    Since $F' \in \mathcal{T}_2^{2s}$, it follows from the induction hypothesis that $F' \cong \hat{\mathbb{T}}_{2i}^{2s}$ for some $1 \le i \le s$. 
    Therefore, $F \cong \hat{\mathbb{T}}_{2i}^{2s+1}$, as desired.

    Next, we present the proof of Lemma~\ref{LEMMA:describe-T2}~\ref{LEMMA:describe-T2-b}. 
    Fix an $r$-graph $F_{1} =\{E_1, E_2, E_3\} \in \mathcal{T}_{1}^{r}$. 
    By the definition of $\mathcal{T}_{1}^{r}$, there exists a vertex $x$ of degree one in $F_1$. 
    By relabeling the edges of $F_1$ if necessary, we may assume that $x \in E_1$. 
    Since $E_2 \neq E_1$, there exists a vertex $y \in E_2 \setminus E_1$. 
    Let $\tilde{F}$ denote the $r$-graph obtained from $F_1$ by replacing the edge $E_1$ with $( E_1\setminus\{x\} ) \cup \{y\}$ (it is possible that $( E_1\setminus\{x\} ) \cup \{y\}$ is already contained in $F_1$, in which case we keep only one copy). 
    Note that $\tilde{F}$ has exactly two edges or the number of vertices of degree one in $\tilde{F}$ is strictly smaller than that of $F_1$, and the map $\phi \colon V(F_1) \to V(\tilde{F})$ given by 
    \begin{align}\label{EQ:map_phi}
        \phi(v) 
        = 
        \begin{cases}
            v, &\quad\text{if}\quad v \neq x, \\
            y, &\quad\text{if}\quad  v = x, 
        \end{cases}
    \end{align}
    is a homomorphism from $F_1$ to $\tilde{F}$. 
    By repeating this operation, we eventually obtain a homomorphism from $F_1$ to an $r$-graph $F$ in which every vertex has degree at least two, or to a single edge. 
    In the former case, we have $F \in \mathcal{T}_{2}^{r}$, and we are done. 
    In the latter case, there exists a homomorphism from $F_1$ to every member in $\mathcal{T}_{2}^{r}$, and we are also done. 
\end{proof}

Combining Lemma~\ref{LEMMA:blowup-lemma} with Lemma~\ref{LEMMA:describe-T2}~\ref{LEMMA:describe-T2-a} and~\ref{LEMMA:describe-T2-b}, we obtain for every integer $r \ge 2$, 
\begin{align}\label{equ:reduction-a}
    \max\left\{\pi(F) \colon F \in \mathcal{T}_{1}^{r} \right\}
    \le \max\left\{\pi(F) \colon F \in \mathcal{T}_{2}^{r} \right\}
    = \max\left\{ \pi(\hat{\mathbb{T}}_{2i}^{r}) \colon 1 \le i \le \lfloor \tfrac{r}{2} \rfloor \right\}. 
\end{align}
Combining this with Lemma~\ref{LEMMA:suspension-Turan-density}, we conclude that for every odd integer $r \ge 3$, 
\begin{align*}
    \max\left\{\pi(F) \colon F \in \mathcal{T}^{r} \right\}
    = \max\left\{ \pi(\hat{\mathbb{T}}_{2i}^{r}) \colon 1 \le i \le \lfloor \tfrac{r}{2} \rfloor \right\}
    \le \max\left\{ \pi(\hat{\mathbb{T}}_{2i}^{2i+1}) \colon 1 \le i \le \lfloor \tfrac{r}{2} \rfloor \right\}. 
\end{align*}

Therefore, the proof of Theorem~\ref{THM:odd-Mantel} reduces to that of Theorem~\ref{THM:suspension-expanded-triangle}. 

We end this subsection by presenting the proof of Corollary~\ref{CORO:even-Mantel}, assuming Theorem~\ref{THM:odd-Mantel}.
We will use the following strengthening of Lemma~\ref{LEMMA:describe-T2}~\ref{LEMMA:describe-T2-b}.

\begin{lemma}\label{LEMMA:describe-T2-strengthen}
    Let $r \ge 3$ be an integer. 
    For every $r$-graph $F_{1} \in \mathcal{T}_{1}^{r}$, there exists an $r$-graph $F_{2} \in \mathcal{T}_{2}^{r}$ with $\Delta(F_{2}) = 3$ such that there exists a homomorphism from $F_1$ to $F_2$.
\end{lemma}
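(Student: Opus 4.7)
The plan is to refine the folding argument of Lemma~\ref{LEMMA:describe-T2}~\ref{LEMMA:describe-T2-b} by choosing the first fold to create a vertex of degree $3$. Given $F_1 = \{E_1, E_2, E_3\} \in \mathcal{T}_1^r$, I would search for a degree-$1$ vertex $x \in E_i$ together with a target $y \in (E_j \cap E_k) \setminus E_i$, where $\{i,j,k\} = \{1,2,3\}$. If such a pair exists, folding $x \to y$ puts $y$ into all three edges of the resulting hypergraph $\tilde{F}$, so $y$ has degree $3$. Continuing the folding procedure of Lemma~\ref{LEMMA:describe-T2}~\ref{LEMMA:describe-T2-b}, each subsequent fold removes only a degree-$1$ vertex (hence leaves $y$ untouched) and keeps $y$ in every edge, preserving $y$'s degree $3$ as long as no edge collapse occurs. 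The process terminates either at some $F' \in \mathcal{T}_2^r$, in which case $\Delta(F') \geq 3$ (hence $=3$) and we set $F_2 = F'$, or at a single-edge $r$-graph, in which case I compose with a trivial injection of that edge into an edge of $\hat{\mathbb{T}}_2^r \in \mathcal{T}_2^r$ (noting $\Delta(\hat{\mathbb{T}}_2^r) = 3$ for $r \ge 3$). Either outcome yields the desired $F_2$ with $\Delta(F_2) = 3$.

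Suppose no such pair $(x, y)$ exists. Decompose $V(F_1)$ into $C := E_1 \cap E_2 \cap E_3$, the pairwise-only parts $B_{ij} := (E_i \cap E_j) \setminus E_k$, and the private parts $A_i := E_i \setminus (E_j \cup E_k)$. The absence of a valid pair translates into $B_{jk} = \emptyset$ whenever $A_i \neq \emptyset$. A short case analysis using $|E_i| = r$ rules out the subcases where exactly one or exactly two of the $A_i$ are nonempty (each forces either a size contradiction or an edge coincidence contradicting $|F_1| = 3$). Hence all three $A_i$ are nonempty, all $B_{ij}$ vanish, and $F_1$ is the $r$-suspension of an $s$-uniform $3$-matching with $|A_i| = s$ and $|C| = r - s$ for some $s \in \{1, \dots, r\}$. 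I then construct a direct homomorphism $\phi \colon F_1 \to \hat{\mathbb{T}}_2^r$. Writing the vertex set of $\hat{\mathbb{T}}_2^r$ as $C' \cup \{v_1, v_2, v_3\}$ with $|C'| = r - 2$ and edges $C' \cup \{v_j, v_k\}$: when $s \ge 2$, embed $C$ injectively into $C'$, let $D := C' \setminus \phi(C)$ (of size $s - 2$), and for each $i$ send two vertices of $A_i$ to the pendants $\{v_j, v_k\}$ other than $v_i$ and the remaining $s - 2$ vertices bijectively onto $D$, so that $\phi(E_i) = C' \cup \{v_j, v_k\}$. When $s = 1$, collapse all three source edges onto a single target edge by mapping $C$ injectively onto $C' \cup \{v_1\}$ (using $|C| = r - 1$) and $a_1, a_2, a_3 \mapsto v_2$, giving $\phi(E_i) = C' \cup \{v_1, v_2\}$ for all $i$.

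The main obstacles I anticipate are the bookkeeping of possible edge collapses in the generic case (handled by the observation that the folding procedure always terminates at $\mathcal{T}_2^r$ or at a single edge, both of which extend easily to a target with $\Delta = 3$) and the $s = 1$ subcase of the degenerate case, where no direct fold can produce a degree-$3$ vertex; this forces reliance on the flexibility in the definition of homomorphism that permits distinct source edges to be mapped to the same target edge.
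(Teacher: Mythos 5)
Your proposal is correct, and it takes a slightly different route from the paper's proof. Both proofs share the central idea: find a degree-$1$ vertex $x$ and a degree-$2$ vertex $y$ lying in no common edge, fold $x \mapsto y$ to create a degree-$3$ vertex, and then observe that further folds (which only remove degree-$1$ vertices and only increase degrees of other vertices) preserve that vertex's degree~$3$ until the process stops in $\mathcal{T}_2^r$ or collapses to a single edge. The difference is in how the existence of the pair $(x,y)$ is handled. The paper first splits on $\Delta(F_1) \in \{1,2,3\}$, disposes of $\Delta = 1$ and $\Delta = 3$ directly, and proves (Claim~\ref{CLAIM:describe-T2-strengthen}) that when $\Delta(F_1) = 2$ such a pair always exists, so it never needs a fallback homomorphism. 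You instead search for the pair without conditioning on $\Delta(F_1)$, and when it fails, classify $F_1$ completely via the decomposition into $C$, the $B_{ij}$, and the $A_i$: you show the only obstruction is $F_1$ being an $r$-suspension of an $s$-uniform $3$-matching (covering both $\Delta(F_1)=1$, where $C = \emptyset$, and part of $\Delta(F_1)=3$, where $C \neq \emptyset$), and you then write down an explicit homomorphism into $\hat{\mathbb{T}}_2^r$, with a separate construction for $s = 1$ where the three edges must collapse onto one. Your version is slightly more self-contained (it does not re-enter Lemma~\ref{LEMMA:describe-T2}~\ref{LEMMA:describe-T2-b} in the degenerate case) at the cost of an explicit construction; the paper's version is shorter because the Claim, under the $\Delta = 2$ hypothesis, removes the degenerate branch entirely. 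One small inaccuracy: in your case analysis, the subcases with exactly one or exactly two nonempty $A_i$ are both excluded by size contradictions alone (the ``edge coincidence'' alternative does not actually arise), but this does not affect correctness.
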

\begin{proof}[Proof of Lemma~\ref{LEMMA:describe-T2-strengthen}]
    Since $r \ge 3$, by Lemma~\ref{LEMMA:describe-T2}~\ref{LEMMA:describe-T2-a}, there exists an $r$-graph $F \in \mathcal{T}_{2}^{r}$ with $\Delta(F) = 3$. 
    Fix an $r$-graph $F_{1} = \{E_1, E_2, E_3\} \in \mathcal{T}_{1}^{r}$. 
    If $\Delta(F_1) = 1$, then $F_1$ consists of three pairwise disjoint edges, which means that there exists a homomorphism from $F_1$ to every $r$-graph $F_2 \in \mathcal{T}_{2}^{r}$ with $\Delta(F_2) = 3$, and we are done. 
    If $\Delta(F_1) = 3$, then we are also done by Lemma~\ref{LEMMA:describe-T2}~\ref{LEMMA:describe-T2-b}. 
    Therefore, we may assume that $\Delta(F_1) = 2$.

    \begin{claim}\label{CLAIM:describe-T2-strengthen}
        There exist two vertices $x, y \in V(F_1)$ that are not contained in any edge of $F_1$, such that $d_{F_1}(x) = 1$ and $d_{F_1}(y) = 2$. 
    \end{claim}    
    \begin{proof}[Proof of Claim~\ref{CLAIM:describe-T2-strengthen}]
        Let $u \in V(F_1)$ be a vertex of degree one (which exists by the definition of $\mathcal{T}_{1}^{r}$), and let $v \in V(F_1)$ be a vertex of degree two (which exists since $\Delta(F_1) = 2$).  
        By symmetry, we may assume that $u \in E_1$.  
        Note that we are done if $v \not\in E_1$. 
        Thus, we may assume that $v \in E_1$ as well.  
        By symmetry, we may assume that $v \in E_1 \cap E_2$.  
        Since $\Delta(F_1) = 2$, we have $v \notin E_3$.  
    
        Suppose that $E_2 \cap E_3 \neq \emptyset$.  
        Choose a vertex $y \in E_2 \cap E_3$.  
        Since $\Delta(F_1) = 2$, the vertex $y$ is not contained in $E_1$.  
        Thus, the pair $\{u, y\}$ satisfies the claim.  
    
        Suppose instead that $E_2 \cap E_3 = \emptyset$.  
        Then there exists a vertex $w \in E_3$ of degree one in $F_1$.  
        The pair $\{w, v\}$ satisfies the claim. 
    \end{proof}

    Fix two vertices $x, y \in V(F_1)$ that are not contained in any edge of $F_1$, such that $d_{F_1}(x) = 1$ and $d_{F_1}(y) = 2$.
    By symmetry, we may assume that $x \in E_1$ and $y \in E_2 \cap E_3$.
    As in the proof of Lemma~\ref{LEMMA:describe-T2}~\ref{LEMMA:describe-T2-b}, let $\tilde{F}$ denote the $r$-graph obtained from $F_1$ by replacing the edge $E_1$ with  $( E_1\setminus\{x\} ) \cup \{y\}$ (it is possible that $( E_1\setminus\{x\} ) \cup \{y\}$ is already contained in $F_1$, in which case we keep only one copy).
    The map $\phi$ defined in~\eqref{EQ:map_phi} is a homomorphism from $F_1$ to $\tilde{F}$.
    
    Note that either $\tilde{F}$ has exactly two edges, or $\tilde{F}$ contains a vertex of degree three.
    In the former case, there exists a homomorphism from $F_1$ to a single edge, and consequently to every $r$-graph $F_2 \in \mathcal{T}_{2}^{r}$ with $\Delta(F_2) = 3$, and we are done. 
    In the latter case, $\tilde{F}$ contains a vertex of degree three, and we are also done by Lemma~\ref{LEMMA:describe-T2}~\ref{LEMMA:describe-T2-b}.
\end{proof}

We are now ready to present the proof of Corollary~\ref{CORO:even-Mantel}. 

\begin{proof}[Proof of Corollary~\ref{CORO:even-Mantel}]
    Fix a positive even integer $r$.
    Let $F$ be an $r$-graph with exactly three edges such that $F \not\cong \mathbb{T}_{r}$.
    If $r = 2$, then $F$ is bipartite (in fact, a forest), which implies that $\pi(F) = 0$.
    Therefore, we may assume that $r \ge 4$.

    Suppose that $F \in \mathcal{T}_{1}^{r}$.  
    Then by Lemma~\ref{LEMMA:describe-T2-strengthen}, there exists a homomorphism from $F$ to some $r$-graph $\tilde{F} \in \mathcal{T}^{r}_{2}$ containing a vertex $x$ of degree three.  
    It follows from Lemma~\ref{LEMMA:blowup-lemma} that $\pi(F) \le \pi(\tilde{F})$.  
    Since $\tilde{F}$ is the $r$-suspension of the $(r-1)$-graph $\tilde{F} - x$, it follows from Lemma~\ref{LEMMA:suspension-Turan-density} and Theorem~\ref{THM:odd-Mantel} (noting that $r-1$ is odd) that  
    \begin{align*}
        \pi(\tilde{F}) \le \pi(\tilde{F} - x) < \frac{1}{2},
    \end{align*}
    which implies that $\pi(F) < 1/2$.  
    
    Suppose that $F \in \mathcal{T}_{2}^{r}$.  
    Since $F \not\cong \mathbb{T}_{r}$, it follows from Lemma~\ref{LEMMA:describe-T2}~\ref{LEMMA:describe-T2-a} that $F \cong \hat{\mathbb{T}}_{2i}^{r}$ for some $i < r/2$.  
    Then by Lemma~\ref{LEMMA:suspension-Turan-density} and Theorem~\ref{THM:suspension-expanded-triangle}, we have  
    \begin{align*}
        \pi(F)
        = \pi(\hat{\mathbb{T}}_{2i}^{r})
        \le \pi(\hat{\mathbb{T}}_{2i}^{2i+1})
        < \frac{1}{2}. 
    \end{align*}
    This completes the proof of Corollary~\ref{CORO:even-Mantel}.   
\end{proof}

\subsection{Proof of Theorem~\ref{THM:suspension-expanded-triangle}}\label{SUBSEC:density-suspension-expanded-triangle}
In this subsection, we present the proof of Theorem~\ref{THM:suspension-expanded-triangle}.
For ease of notation, we set $\mathbb{T} \coloneqq \mathbb{T}_{2k}$ and $\hat{\mathbb{T}} \coloneqq \hat{\mathbb{T}}_{2k}^{2k+1}$, and we will also omit the superscript $2k$ from $\mathbb{B}^{2k}[V_1, V_2]$. 

\begin{proof}[Proof of Theorem~\ref{THM:suspension-expanded-triangle}]
    Fix an integer $k \ge 1$. 
    Let $\varepsilon > 0$ be a sufficiently small constant, and let $\delta \coloneqq \delta_{\ref{THM:T2k-stability}}(k, \varepsilon) > 0$ and $N \coloneqq N_{\ref{THM:T2k-stability}}(k, \varepsilon) > 0$ be the constants given by Theorem~\ref{THM:T2k-stability}. 
    By reducing $\delta$ if necessary, we may assume that $\delta \le \varepsilon$.
    We will prove that $\pi(\hat{\mathbb{T}}) \le \tfrac{1}{2} - \tfrac{\delta}{3}$. 
    
    Assume for contradiction that $\pi(\hat{\mathbb{T}}) > \tfrac{1}{2} - \tfrac{\delta}{3}$. 
    Let $n \gg N$ be a sufficiently large integer, and let $\mathcal{H}$ be a $\hat{\mathbb{T}}$-free $(2k+1)$-graph on $n$ vertices with exactly $\mathrm{ex}(n,\hat{\mathbb{T}})$ edges, that is, $\mathcal{H}$ is an extremal $(2k+1)$-graph. 
    We may choose $n$ sufficiently large so that
    \begin{align*}
        \left|\mathcal{H}\right| 
        = \mathrm{ex}(n,\hat{\mathbb{T}})
        \ge \left(\frac{1}{2} - \frac{\delta}{2}\right)\binom{n}{2k+1}. 
    \end{align*}
    Observe that every pair of vertices in $\hat{\mathbb{T}}$ is contained in some edge. 
    Thus, a standard deleting-duplicating argument (see e.g.~{\cite[Proposition 2.6]{BLLP25}}) shows that 
    \begin{align*}
        \Delta(\mathcal{H}) - \delta(\mathcal{H})
        \le \binom{n-2}{2k-1}. 
    \end{align*}
    Consequently, 
    \begin{align}\label{equ:mindeg-H}
        \delta(\mathcal{H})
        \ge d(\mathcal{H}) - \binom{n-2}{2k-1}
        = \frac{(2k+1) \cdot |\mathcal{H}|}{n} - \binom{n-2}{2k-1}
        \ge \left(\frac{1}{2} - \delta\right)\binom{n}{2k}. 
    \end{align}
    Let $x \in V(\mathcal{H})$ be a vertex. 
    Since $\mathcal{H}$ is $\hat{\mathbb{T}}$-free, it follows from the definition of $\hat{\mathbb{T}}$ that the link $L(x)$ is $\mathbb{T}$-free. 
    Combining it with~\eqref{equ:mindeg-H} and Theorem~\ref{THM:T2k-stability}, we conclude that there exists a partition $V_x^1 \cup V_x^2 = V \coloneqq V(\mathcal{H})$ with $\left||V_x^1|-|V_x^2|\right| \le 1$ such that 
    \begin{align}\label{equ:T2k-stability-link}
        \left|L(x) \triangle  \mathbb{B}[V_x^1,V_x^2] \right| \le \varepsilon n^{2k}. 
    \end{align}
    Define the set of bad edges and missing edges (of $L(x)$) as 
    \begin{align*}
        \mathcal{B}_{x}
        \coloneqq L(x) \setminus \mathbb{B}[V_x^1,V_x^2] 
        \quad\text{and}\quad 
        \mathcal{M}_{x}
        \coloneqq \mathbb{B}[V_x^1,V_x^2] \setminus L(x). 
    \end{align*}
    It follows from~\eqref{equ:T2k-stability-link} that 
    \begin{align}\label{equ:upper-bound-B-M}
        \max \{|\mathcal{B}_{x}|,~|\mathcal{M}_{x}|\}
        \le |\mathcal{B}_{x}| + |\mathcal{M}_{x}| 
        = \left|L(x) \triangle \mathbb{B}[V_x^1,V_x^2] \right| 
        \le \varepsilon n^{2k}. 
    \end{align} 
    Define 
    \begin{align*}
        Z_x \coloneqq 
        \left\{v \in V(\mathcal{H}) \colon d_{\mathcal{M}_{x}}(v) \ge  \varepsilon^{1/2} n^{2k-1}\right\}. 
    \end{align*}
    
    \begin{claim}\label{CLAIM:few-mindegree-vtx}
        We have $\left|Z_{x}\right| \le 2k \varepsilon^{1/2}n$. 
    \end{claim}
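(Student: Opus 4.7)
The plan is a short double-counting argument using the upper bound $|\mathcal{M}_x| \le \varepsilon n^{2k}$ from~\eqref{equ:upper-bound-B-M}. Since $\mathcal{M}_x$ is a $2k$-graph on $V(\mathcal{H})$, every edge of $\mathcal{M}_x$ contributes exactly $2k$ to the sum of vertex degrees, so
\begin{align*}
    \sum_{v \in V(\mathcal{H})} d_{\mathcal{M}_x}(v)
    = 2k \cdot |\mathcal{M}_x|
    \le 2k \varepsilon n^{2k}.
\end{align*}

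Next I would restrict the left-hand sum to the set $Z_x$ and use the defining lower bound $d_{\mathcal{M}_x}(v) \ge \varepsilon^{1/2} n^{2k-1}$ for every $v \in Z_x$ to obtain
\begin{align*}
    |Z_x| \cdot \varepsilon^{1/2} n^{2k-1}
    \le \sum_{v \in Z_x} d_{\mathcal{M}_x}(v)
    \le \sum_{v \in V(\mathcal{H})} d_{\mathcal{M}_x}(v)
    \le 2k \varepsilon n^{2k}.
\end{align*}
Rearranging yields $|Z_x| \le 2k \varepsilon^{1/2} n$, which is exactly the claimed bound.

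There is no real obstacle here; the only care needed is to use the correct edge-degree identity for a $2k$-uniform hypergraph (factor $2k$ rather than $2k+1$ or $r$), since $\mathcal{M}_x \subseteq \binom{V}{2k}$ lives inside the link $L(x)$ and is thus a $2k$-graph. The claim is effectively just Markov's inequality applied to vertex degrees in $\mathcal{M}_x$.
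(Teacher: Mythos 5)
Your argument is correct and is essentially identical to the paper's: both use the degree-sum identity $\sum_{v} d_{\mathcal{M}_x}(v) = 2k|\mathcal{M}_x|$ together with the bound $|\mathcal{M}_x| \le \varepsilon n^{2k}$ from~\eqref{equ:upper-bound-B-M} and the defining threshold of $Z_x$. Nothing to add.
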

    \begin{proof}[Proof of Claim~\ref{CLAIM:few-mindegree-vtx}]
        By definition of $Z_{x}$, we have 
        \begin{align*}
            |\mathcal{M}_{x}| 
            = \frac{1}{2k} \sum_{v \in V} d_{\mathcal{M}_{x}}(v)
            \ge \frac{1}{2k} |Z_{x}| \cdot \varepsilon^{1/2} n^{2k-1}. 
        \end{align*}
        Combining this with~\eqref{equ:upper-bound-B-M}, we obtain $|Z_{x}|\le 2k \varepsilon^{1/2}n$. 
    \end{proof}

    For convenience, let $\mathcal{P}_x \coloneqq \{V_{x}^{1}, V_{x}^{2}\}$ denote the bipartition corresponding to the vertex $x$.
    For a pair of vertices $x,y \in V$, define 
    \begin{align*}
        \mathrm{dist}(\mathcal{P}_{x}, \mathcal{P}_{y})
        \coloneq \min \left\{|V_x^1 \triangle V_y^1|,~|V_x^1 \triangle V_y^2|\right\}. 
    \end{align*}
    
    \begin{claim}\label{CLAIM:small-diff}
        For every pair of vertices $x, y \in V$ with $y \in V \setminus Z_{x}$, we have 
        \begin{align*}
            \mathrm{dist}(\mathcal{P}_{x}, \mathcal{P}_{y}) 
            \le \varepsilon^{1/8k} n. 
        \end{align*}
    \end{claim}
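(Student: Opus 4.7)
My approach is to derive a contradiction from the assumption $\mathrm{dist}(\mathcal{P}_x, \mathcal{P}_y) > \varepsilon^{1/8k} n$: I will force $|\mathcal{M}_y|$ to exceed $\varepsilon n^{2k}$, contradicting~\eqref{equ:upper-bound-B-M} applied at $y$ whenever $\varepsilon$ is small, because $\varepsilon^{1/8k} \gg \varepsilon$ for $k \ge 1$. After relabeling parts, I may assume $|V_x^1 \triangle V_y^1| \ge \varepsilon^{1/8k} n$ and $y \in V_x^1$. The near-balance of the two partitions then forces $|V_x^1 \setminus V_y^1|, |V_y^1 \setminus V_x^1| = \Theta(\varepsilon^{1/8k} n)$. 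The first step is to exhibit a large family of $2k$-sets witnessing the disagreement: let $\mathcal{E}$ denote the collection of $E \in \binom{V \setminus \{x,y\}}{2k}$ with $|E \cap V_x^1|$ even and $|E \cap V_y^1|$ odd, equivalently $E \in \mathbb{B}[V_y^1,V_y^2] \setminus \mathbb{B}[V_x^1,V_x^2]$. A direct count---restricting to $E$ containing exactly one vertex of $V_x^1 \triangle V_y^1$ and $2k-1$ vertices of its complement with appropriate parity on $V_x^1$---gives $|\mathcal{E}| \ge c_k \varepsilon^{1/8k} n^{2k}$ for some constant $c_k > 0$ depending only on $k$.

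The crucial step is to force $E \notin L(y)$ for most $E \in \mathcal{E}$ by exploiting the $\mathbb{T}_{2k}$-freeness of $L(y)$. Let $\mathbb{B}_y^{(x)}$ denote the link of $y$ in $\mathbb{B}[V_x^1, V_x^2]$ and $L_y(x) \subseteq \binom{V \setminus \{x,y\}}{2k-1}$ the link of $y$ in $L(x)$. Since $y \notin Z_x$, the set $M \coloneqq \mathbb{B}_y^{(x)} \setminus L_y(x)$ satisfies $|M| < \varepsilon^{1/2} n^{2k-1}$. For each $E \in \mathcal{E}$ I would seek a decomposition $E = A \sqcup B$ with $|A| = |B| = k$ together with a $(k-1)$-set $K \subseteq V \setminus (E \cup \{x,y\})$ satisfying $|K \cap V_x^1| \equiv |A \cap V_x^1| \pmod{2}$. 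This parity condition, coupled with $|E \cap V_x^1|$ being even, forces both $R_2 \coloneqq K \cup A$ and $R_3 \coloneqq K \cup B$ to have $|R_i \cap V_x^1|$ even, hence to belong to $\mathbb{B}_y^{(x)}$ (using $y \in V_x^1$). If additionally $R_2, R_3 \notin M$, then $R_2, R_3 \in L_y(x)$, so $R_2 \cup \{x\}, R_3 \cup \{x\} \in L(y)$; setting $S_1 \coloneqq \{x\} \cup K$, $S_2 \coloneqq A$, and $S_3 \coloneqq B$, these are pairwise disjoint $k$-sets with $S_1 \cup S_2 = R_2 \cup \{x\}$, $S_1 \cup S_3 = R_3 \cup \{x\}$, and $S_2 \cup S_3 = E$. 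Thus if $E$ were also in $L(y)$, then $\{S_1 \cup S_2,\, S_1 \cup S_3,\, S_2 \cup S_3\}$ would form a copy of $\mathbb{T}_{2k}$ in $L(y)$, contradicting the $\hat{\mathbb{T}}$-freeness of $\mathcal{H}$. Hence $E \notin L(y)$ whenever such a witness $(A, B, K)$ avoiding $M$ exists.

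Finally, a double-counting estimate shows that most $E \in \mathcal{E}$ admit at least one such witness. Each $R \in M$ can appear as $K \cup A$ or $K \cup B$ in at most $2\binom{2k-1}{k}\binom{n-2k-1}{k} = O(n^k)$ triples $(E, A, K)$, so the total number of bad triples across all $E$ is $O(|M| \cdot n^k) = O(\varepsilon^{1/2} n^{3k-1})$. Since each $E \in \mathcal{E}$ admits $\Theta(n^{k-1})$ triples $(A, K)$ meeting the parity condition, Markov's inequality shows that at most $O(\varepsilon^{1/2} n^{2k})$ sets in $\mathcal{E}$ have all their triples bad. The remaining $\Omega(\varepsilon^{1/8k} n^{2k}) - O(\varepsilon^{1/2} n^{2k}) = \Omega(\varepsilon^{1/8k} n^{2k})$ sets in $\mathcal{E}$ lie in $\mathbb{B}[V_y^1, V_y^2] \setminus L(y) = \mathcal{M}_y$, yielding $|\mathcal{M}_y| \ge \Omega(\varepsilon^{1/8k} n^{2k})$, which contradicts~\eqref{equ:upper-bound-B-M} applied at $y$. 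The main obstacle is this final double-counting: the parity condition couples $K$ to $A$, so one must carefully check that the bad triples induced by $M$ are spread out sufficiently to leave good witnesses for the vast majority of $E \in \mathcal{E}$.
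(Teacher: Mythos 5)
Your proposal is correct in its main thrust and uses the same essential ingredients as the paper (the bound $d_{\mathcal{M}_x}(y) < \varepsilon^{1/2} n^{2k-1}$ coming from $y \notin Z_x$, the bound on $\mathcal{M}_y$ from Theorem~\ref{THM:T2k-stability}, and a copy of $\hat{\mathbb{T}}$ built through $x, y$ with $y$ as the apex), but it is organized quite differently. The paper argues directly: it shows each $U_{ij} = V_x^i \cap V_y^j$ is large, then picks $z, S_1, S_2, S_3$ randomly from the appropriate quadrants, and union-bounds three bad events (each controlled by a single instance of the stability bound on $L(x)$ or $L(y)$) to find a copy of $\hat{\mathbb{T}}$ with positive probability. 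You instead exhibit a large disagreement family $\mathcal{E} \subseteq \mathbb{B}[V_y^1, V_y^2] \setminus \mathbb{B}[V_x^1, V_x^2]$ (size $\Omega(\varepsilon^{1/8k} n^{2k})$), use a per-$E$ double count over triples $(A,K)$ to show most $E \in \mathcal{E}$ cannot lie in $L(y)$ without creating a $\hat{\mathbb{T}}$, and conclude that $|\mathcal{M}_y| \gg \varepsilon n^{2k}$, contradicting~\eqref{equ:upper-bound-B-M}. Both close because $\varepsilon^{1/8k} \gg \varepsilon^{1/2} \gg \varepsilon$. The paper's route is shorter since it never needs to show $\mathcal{E}$ is large or that good witnesses exist for a positive fraction of $\mathcal{E}$; your route has the conceptual advantage of reducing the claim to a clean statement about missing edges.

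One concrete detail you should patch: your assertion that \emph{every} $E \in \mathcal{E}$ admits $\Theta(n^{k-1})$ triples $(A,K)$ satisfying the parity condition fails for $k = 1$. There $K = \emptyset$, the parity condition forces $|A \cap V_x^1|$ even, hence $|A| = 1$ requires $A \subseteq V_x^2$, so an $E \in \mathcal{E}$ with $E \subseteq V_x^1$ (which does occur when the distinguished vertex $e$ lands in $U_{12}$) has no valid triples at all. This is fixable by restricting the construction of $\mathcal{E}$ to take $e \in U_{21}$ (so that $E \subseteq V_x^2$ in the $k=1$ case, and more generally $|E \cap V_x^2|$ is large), which only costs a constant factor in $|\mathcal{E}|$. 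With that restriction in place, the per-$E$ count $\Theta(n^{k-1})$ is uniform, the Markov-type argument goes through, and the contradiction $|\mathcal{M}_y| > \varepsilon n^{2k}$ follows as you describe.
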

    \begin{proof}[Proof of Claim~\ref{CLAIM:small-diff}]
        For each $(i,j) \in [2] \times [2]$, define $U_{ij} \coloneqq V_x^i \cap V_y^j$. 
        Suppose for contradiction that Claim~\ref{CLAIM:small-diff} fails. 
        Then
        \begin{align*}
            |U_{12}| + |U_{21}| 
            = |V_x^1 \triangle V_y^1| 
            \ge \mathrm{dist}(\mathcal{P}_{x}, \mathcal{P}_{y}) 
            > \varepsilon^{1/8k} n. 
        \end{align*}
        Since both partitions $V_{x}^{1} \cup V_{x}^{2} = V$ and $V_{y}^{1} \cup V_{y}^{2} = V$ are balanced, we have 
        \begin{align*}
            \bigl||U_{12}| - |U_{21}|\bigr|
            = \bigl|\big(|V_x^1| - |U_{11}|\big) - \big(|V_y^1|-|U_{11}| \big) \bigr|
            = \bigl| |V_x^1| - |V_y^1| \bigr|
            \le 1. 
        \end{align*}
        Consequently, 
        \begin{align*}
            \min\left\{ |U_{12}|,~|U_{21}| \right\} 
            \ge \frac{1}{2} |V_x^1 \triangle V_y^1| - 1
            \ge \frac{\varepsilon^{1/8k} n}{3}.
        \end{align*}
        For the same reason, the inequality also holds for $U_{11}$ and $U_{22}$. In conclusion, for every $(i,j) \in [2] \times [2]$, 
        \begin{align}\label{equ:Uij-lower-bound}
            |U_{ij}| 
            \ge \frac{\varepsilon^{1/8k} n}{3} 
            > |Z_{x}|. 
        \end{align}

        Assume by symmetry that $y \in U_{11}$. 
        Choose uniformly at random a vertex $z$ and a $(k-1)$-set $S_1$ from $U_{21}$ with $z \not\in S_1$, and independently, choose uniformly at random a $(k-1)$-set $S_2$ and a $k$-set $S_3$ from $U_{22}$ with $S_2 \cap S_3 = \emptyset$.
        Let 
        \begin{itemize}
            \item $A_1$ be the event that $\{y\}\cup S_1 \cup S_3 \in L(x)$, 
            \item $A_2$ be the event that $\{y,z\}\cup S_1 \cup S_2 \in L(x)$, and 
            \item $A_3$ be the event that $\{z\} \cup S_2 \cup S_3 \in L(y)$. 
        \end{itemize}

        Since $y \in V \setminus Z_{x}$, we have from the definition of $Z_{x}$ that $d_{\mathcal{M}_{x}}(y) < \varepsilon^{1/2} n^{2k-1}$. 
        Combining this with~\eqref{equ:Uij-lower-bound}, we conclude that the probability that $A_1$ does not occur satisfies 
        \begin{align*}
            \mathbb{P}\left[ \xoverline{A_{1}} \right]
            \le \frac{\varepsilon^{1/2}n^{2k-1}}{\binom{|U_{21}|-1}{k-1}\binom{|U_{22}|-k+1}{k}} 
            & \le \frac{(k-1)!k!\varepsilon^{1/2}n^{2k-1}}{\left(\tfrac{\varepsilon^{1/8k}n}{3}-2k+2\right)^{2k-1}} \\
            & \le \frac{(k-1)!k!\varepsilon^{1/2}n^{2k-1}}{\left({\varepsilon^{1/8k}n}/{4}\right)^{2k-1}}
            \le 4^{2k-1}(k!)^2 \varepsilon^{\frac{2k+1}{8k}}
            < \frac{1}{4}. 
        \end{align*}
        Similarly, the probability that $A_2$ does not occur satisfies 
        \begin{align*}
            \mathbb{P}\left[ \xoverline{A_{2}} \right]
            \le \frac{\varepsilon^{1/2}n^{2k-1}}{\left(|U_{21}|-k+1\right)\binom{|U_{21}|-1}{k-1}\binom{|U_{22}|-k}{k-1}} 
            & \le \frac{\left((k-1)!\right)^2\varepsilon^{1/2}n^{2k-1}}{\left(\tfrac{\varepsilon^{1/8k}n}{3}-2k+2\right)^{2k-1}} \\
            & \le 4^{2k-1}(k!)^2 \varepsilon^{\frac{2k+1}{8k}}
            < \frac{1}{4}. 
        \end{align*}
        It follows from~\eqref{equ:upper-bound-B-M} and~\eqref{equ:Uij-lower-bound} that the probability that $A_3$ does not occur satisfies
        \begin{align*}
            \mathbb{P}\left[ \xoverline{A_{3}} \right]
            \le \frac{\varepsilon n^{2k}}{\left(|U_{21}|-k+1\right)\binom{|U_{22}|-k}{k-1}\binom{|U_{22}|-k+1}{k}} 
            \le \frac{(k-1)!k!\varepsilon n^{2k}}{\left( \tfrac{\varepsilon^{1/8k}n}{3} -2k+2\right)^{2k}} 
            \le 4^{2k}(k!)^2\varepsilon^{\frac{3}{4}}
            < \frac{1}{4}. 
        \end{align*}
        Therefore, the probability that $A_1$, $A_2$, and $A_3$ occur simultaneously satisfies
        \begin{align*}
            \mathbb{P}\left[ A_1\cap A_2 \cap A_3 \right] 
            \ge 1- \mathbb{P}\left[ \xoverline{A_{1}} \right] - \mathbb{P}\left[ \xoverline{A_{2}} \right] - \mathbb{P}\left[ \xoverline{A_{3}} \right] 
            > \frac{1}{4}. 
        \end{align*}
        Fix a selection of $(z, S_1, S_2, S_3)$ such that $A_1$, $A_2$, and $A_3$ occur simultaneously.  
        Let 
        \begin{align*} 
            E_1 \coloneqq \{x, y\}\cup S_1 \cup S_3, \quad 
            E_2 \coloneqq \{x, y,z\}\cup S_1 \cup S_2, \quad\text{and}\quad 
            E_3 \coloneqq \{y, z\} \cup S_2 \cup S_3. 
        \end{align*}
        Note that $\{E_1, E_2, E_3\} \subseteq \mathcal{H}$ forms a copy of $\hat{\mathbb{T}}$, with $y$ serving as the vertex of degree three, which is a contradiction.
    \end{proof}
    
    Fix a vertex $x \in V$, and let $W_1 \coloneqq V_x^1\setminus Z_{x}$ and $W_2 \coloneqq V_x^2\setminus Z_{x}$. 
    It follows from Claim~\ref{CLAIM:few-mindegree-vtx} that 
    \begin{align*}
        \min\{|W_1|,~|W_2|\} 
        \ge \min\{|V_{x}^{1}| - |Z_{x}|,~|V_{x}^{2}| - |Z_{x}|\}
        \ge \lfloor n/2 \rfloor - 2k \varepsilon^{1/2} n. 
    \end{align*}
    For every vertex $y \in V \setminus Z_{x}$, it follows from Claim~\ref{CLAIM:small-diff} that 
    \begin{align*}
        \mathrm{dist}(\mathcal{P}_x, \mathcal{P}_y)
        = \min \left\{|V_x^1 \triangle V_y^1|,~|V_x^1 \triangle V_y^2|\right\} 
        \le \varepsilon^{1/8k} n. 
    \end{align*}
    By relabeling the two parts $V_y^1$ and $V_y^2$ if necessary, we may assume that it is always the case that
    \begin{align}\label{equ:Vx1-Vz1-diff}
        |V_x^1 \triangle V_y^1|
        = \mathrm{dist}(\mathcal{P}_x, \mathcal{P}_y)
        \le \varepsilon^{1/8k} n. 
    \end{align}

    \begin{claim}\label{CLAIM:bad-link-small}
        For every vertex $y \in V \setminus Z_{x}$, we have 
        \begin{align*}
            \left| L(y) \triangle \mathbb{B}[W_1, W_2] \right| 
            \le 5k \varepsilon^{1/8k} n^{2k}. 
        \end{align*}
    \end{claim}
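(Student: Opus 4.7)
The plan is to use the triangle inequality to split
\[
\bigl| L(y) \triangle \mathbb{B}[W_1, W_2] \bigr|
\le \bigl| L(y) \triangle \mathbb{B}[V_y^1, V_y^2] \bigr|
  + \bigl| \mathbb{B}[V_y^1, V_y^2] \triangle \mathbb{B}[W_1, W_2] \bigr|,
\]
and handle the two terms separately. The first term is immediately bounded by $\varepsilon n^{2k}$ because $L(y)$ is $\mathbb{T}$-free, has at least $(1/2-\delta)\binom{n}{2k}$ edges by~\eqref{equ:mindeg-H}, and hence satisfies the stability conclusion~\eqref{equ:T2k-stability-link} with the partition $V_y^1 \cup V_y^2$. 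The real content of the claim lies in estimating how much the two odd-bipartite hypergraphs $\mathbb{B}[V_y^1, V_y^2]$ and $\mathbb{B}[W_1, W_2]$ differ.

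For the second term, I would classify a $2k$-set $S$ in the symmetric difference according to whether $S \cap Z_{x} = \emptyset$. Sets that meet $Z_{x}$ contribute at most $|Z_{x}| \cdot \binom{n-1}{2k-1}$, which is at most $2k\varepsilon^{1/2} n \cdot \binom{n-1}{2k-1}$ by Claim~\ref{CLAIM:few-mindegree-vtx}. For sets $S \subseteq W_1 \cup W_2$, membership in $\mathbb{B}[V_y^1, V_y^2]$ and in $\mathbb{B}[W_1, W_2]$ is determined by the parities of $|S \cap V_y^1|$ and $|S \cap W_1|$, which coincide iff $|S \cap (V_y^1 \triangle W_1)|$ is even. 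Since we are restricting to $S$ disjoint from $Z_{x}$, we have $S \cap (V_y^1 \triangle W_1) \subseteq V_x^1 \triangle V_y^1$, so any $S$ witnessing a parity mismatch must meet $V_x^1 \triangle V_y^1$. By~\eqref{equ:Vx1-Vz1-diff}, this contributes at most $\varepsilon^{1/8k} n \cdot \binom{n-1}{2k-1}$.

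Combining these two contributions gives
\[
\bigl| \mathbb{B}[V_y^1, V_y^2] \triangle \mathbb{B}[W_1, W_2] \bigr|
\le \bigl( 2k\varepsilon^{1/2} + \varepsilon^{1/8k} \bigr) n \cdot \binom{n-1}{2k-1}
\le 4k\,\varepsilon^{1/8k} n^{2k},
\]
where the last inequality uses $\varepsilon^{1/2} \le \varepsilon^{1/8k}$ and crude bounds on the binomial. Adding the first-term bound $\varepsilon n^{2k} \le \varepsilon^{1/8k} n^{2k}$ yields the desired estimate $5k\,\varepsilon^{1/8k} n^{2k}$.

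The calculations are routine; the only point requiring a bit of care is the parity analysis that identifies where $\mathbb{B}[V_y^1, V_y^2]$ and $\mathbb{B}[W_1, W_2]$ can disagree on subsets of $W_1 \cup W_2$, and the observation that deleting $Z_x$ from the ambient vertex set lets us replace $V_y^1 \triangle W_1$ by the smaller set $V_x^1 \triangle V_y^1$ controlled by Claim~\ref{CLAIM:small-diff}.
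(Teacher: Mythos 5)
Your proposal is correct and follows essentially the same route as the paper: both begin with the triangle inequality $\left| L(y) \triangle \mathbb{B}[W_1, W_2] \right| \le \left| L(y) \triangle \mathbb{B}[V_y^1, V_y^2] \right| + \left| \mathbb{B}[V_y^1, V_y^2] \triangle \mathbb{B}[W_1, W_2] \right|$, bound the first term by $\varepsilon n^{2k}$ via the stability conclusion, and control the second term by noting that any $2k$-set in the symmetric difference of the two odd-bipartite hypergraphs must meet $Z_x$ or $V_x^1 \triangle V_y^1$, whose sizes are bounded by Claims~\ref{CLAIM:few-mindegree-vtx} and~\ref{CLAIM:small-diff}. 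The paper packages the second step as a single inequality $\left| \mathbb{B}[V_y^1,V_y^2] \triangle \mathbb{B}[W_1,W_2] \right| \le (|W_1 \triangle V_y^1| + |W_2 \triangle V_y^2|)\binom{n}{2k-1}$ followed by another triangle inequality, whereas you make the parity analysis explicit and split by whether $S$ meets $Z_x$, but this is the same decomposition in different notation.
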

    \begin{proof}[Proof of Claim~\ref{CLAIM:bad-link-small}]
        It follows from~\eqref{equ:T2k-stability-link},~\eqref{equ:Vx1-Vz1-diff}, and Claim~\ref{CLAIM:small-diff} that 
        \begin{align*}
            \left| L(y) \triangle \mathbb{B}[W_1, W_2] \right| 
            & \le \left| L(y) \triangle \mathbb{B}[V_{y}^{1}, V_{y}^{2}] \right| + \left| \mathbb{B}[V_{y}^{1}, V_{y}^{2}] \triangle \mathbb{B}[W_1, W_2] \right| \\
            & \le \varepsilon n^{2k} + \big(|W_1 \triangle V_{y}^{1}| + |W_2 \triangle V_{y}^{2}| \big) \tbinom{n}{2k-1} \\
            & \le \varepsilon n^{2k} + \big( |W_1 \triangle V_{x}^{1}| + |V_{x}^{1} \triangle V_{y}^{1}| +  |W_2 \triangle V_{x}^{2}| + |V_{x}^{2} \triangle V_{y}^{2}| \big) \tbinom{n}{2k-1} \\
            & = \varepsilon n^{2k} + \left( 2 \cdot  \mathrm{dist}(\mathcal{P}_{x}, \mathcal{P}_{y}) + |Z_{x}|\right) \tbinom{n}{2k-1} \\
            & \le \varepsilon n^{2k} + \left( 2 \varepsilon^{1/8k}n + 2k \varepsilon^{1/2} n \right) \tbinom{n}{2k-1}
            \le 5k \varepsilon^{1/8k} n^{2k}, 
        \end{align*}
        which proves Claim~\ref{CLAIM:bad-link-small}. 
    \end{proof}

    Next, we show that there exists a vertex $y \in W_1$ with $\left| L(y) \triangle \mathbb{B}[W_1, W_2] \right| > 5k \varepsilon^{1/8k} n^{2k}$. 
    This contradicts Claim~\ref{CLAIM:bad-link-small}, and thus completing the proof of Theorem~\ref{THM:suspension-expanded-triangle}. 

    \begin{claim}\label{CLAIM:bad-link-large}
        There exists a vertex $y \in W_1 \subseteq V \setminus Z_{x}$ such that
        \begin{align*}
            \left| L(y) \triangle \mathbb{B}[W_1, W_2] \right| 
            \ge \frac{n^{2k}}{4^{2k} (2k)!} 
            > 5k \varepsilon^{1/8k} n^{2k}. 
        \end{align*}
    \end{claim}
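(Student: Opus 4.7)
The plan relies on the following parity obstruction. Every $(2k+1)$-set $e \subset V$ satisfies $|e \cap W_1| + |e \cap W_2| + |e \cap Z_x| = 2k+1$, so when $|e \cap Z_x|$ is even the parities of $|e \cap W_1|$ and $|e \cap W_2|$ cannot both be odd. In particular, for each $v \in e$, the $2k$-set $e \setminus \{v\}$ lies in $\mathbb{B}[W_1, W_2]$ from the perspective of vertices on one side of the bipartition but not the other, so the link structure predicted by $\mathbb{B}[W_1, W_2]$ is incompatible across $W_1$ and $W_2$, forcing some link to deviate substantially.

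Concretely, I would focus on the subfamily $\mathcal{H}^{\ast} \coloneqq \{e \in \mathcal{H} \colon |e \cap W_1| = 1,~|e \cap W_2| = 2k\}$; note that every such edge satisfies $e \cap Z_x = \emptyset$. For $e \in \mathcal{H}^{\ast}$ with unique $W_1$-vertex $y$, the set $e \setminus \{y\} \subset W_2$ has empty intersection with $W_1$, so it is \emph{not} in $\mathbb{B}[W_1, W_2]$ and contributes to $L(y) \setminus \mathbb{B}[W_1, W_2]$. On the other hand, for each $v \in e \cap W_2$, the set $e \setminus \{v\}$ has parities $(1, 2k-1)$, hence lies in $\mathbb{B}[W_1, W_2]$, which is consistent with the approximate link of $v$.

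Step~1 (lower bound on $|\mathcal{H}^{\ast}|$): For each $v \in W_2 \setminus Z_x$, Claim~\ref{CLAIM:bad-link-small} implies that $L(v)$ contains at least $|W_1| \tbinom{|W_2|-1}{2k-1} - 5k\varepsilon^{1/8k} n^{2k}$ many $2k$-sets of the form ``one vertex in $W_1$ and $2k-1$ vertices in $W_2 \setminus \{v\}$.'' Double-counting pairs $(v, e)$ with $e \in \mathcal{H}^{\ast}$ and $v \in e \cap (W_2 \setminus Z_x)$, using $|e \cap W_2| = 2k$ and $e \cap Z_x = \emptyset$, yields
\[
    2k \cdot |\mathcal{H}^{\ast}|
    \ge (|W_2| - |Z_x|) \left( |W_1| \tbinom{|W_2|-1}{2k-1} - 5k\varepsilon^{1/8k} n^{2k} \right).
\]
Combined with $|W_1|,~|W_2| \ge \lfloor n/2 \rfloor - 2k\varepsilon^{1/2}n$ from Claim~\ref{CLAIM:few-mindegree-vtx}, this gives $|\mathcal{H}^{\ast}| \ge (1 - o(1)) \cdot \frac{n \cdot (n/2)^{2k}}{2 \cdot (2k)!}$ as $\varepsilon \to 0$.

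Step~2 (pigeonhole): Each edge of $\mathcal{H}^{\ast}$ has a unique $W_1$-vertex, so averaging yields some $y \in W_1$ contained in at least $|\mathcal{H}^{\ast}|/|W_1| \ge (1-o(1)) \frac{n^{2k}}{2^{2k}(2k)!}$ edges of $\mathcal{H}^{\ast}$. Each such edge contributes a distinct element to $L(y) \setminus \mathbb{B}[W_1, W_2]$, so $|L(y) \triangle \mathbb{B}[W_1, W_2]| \ge \frac{n^{2k}}{4^{2k}(2k)!}$ with slack to spare, since $2^{2k} < 4^{2k}$. The final inequality in the claim is then immediate for $\varepsilon$ sufficiently small. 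The only real obstacle is careful bookkeeping of the several error terms ($\varepsilon$ from stability, $\varepsilon^{1/2}$ from $|Z_x|$, and $\varepsilon^{1/8k}$ from Claim~\ref{CLAIM:bad-link-small}); the conceptual heart of the argument is the parity mismatch described above.
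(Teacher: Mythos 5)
Your proof is correct and takes a genuinely different — and arguably cleaner — route than the paper's. Both arguments exploit the same parity obstruction you identify at the outset: an edge $e \in \mathcal{H}$ with $|e \cap W_1| = 1$ and $|e \cap W_2| = 2k$ looks ``right'' in the link of a $W_2$-vertex (removing it gives parities $(1,2k-1)$, which is odd--odd) but necessarily ``wrong'' in the link of the unique $W_1$-vertex (removing it gives $(0,2k)$, even--even, hence outside $\mathbb{B}[W_1,W_2]$). The difference is in how the two sides of this tension are quantified. You work entirely with the fixed approximate bipartition $\{W_1, W_2\}$: Claim~\ref{CLAIM:bad-link-small} applied to all $v \in W_2$ gives a global lower bound on $|\mathcal{H}^{\ast}|$ by double counting, after which a single pigeonhole over $W_1$ finishes the argument. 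The paper instead first fixes the exceptional $y \in W_1$ via an auxiliary bipartite graph $G$ (where adjacency records $y \in V_z^1 \setminus Z_z$) and then lower-bounds, for each neighbour $w$, the quantity $|\mathcal{L}_w|$ using the vertex-specific partition $\{V_w^1, V_w^2\}$ and the bound $d_{\mathcal{M}_w}(y) < \varepsilon^{1/2}n^{2k-1}$. Your version avoids the auxiliary graph and the second layer of partitions; in exchange it leans more heavily on Claim~\ref{CLAIM:bad-link-small} as a black box, whereas the paper's route is closer to first principles (it uses only $Z_w$ rather than the already-processed $W_1, W_2$ partition). Both deliver the same order $n^{2k}/2^{2k}(2k)!$, with the stated $4^{2k}$ absorbing the error terms. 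One cosmetic slip: you write ``$v \in W_2 \setminus Z_x$'' and ``$(|W_2| - |Z_x|)$'' in Step~1, but $W_2 = V_x^2 \setminus Z_x$ is already disjoint from $Z_x$, so $W_2 \setminus Z_x = W_2$ and the subtraction is unnecessary (though harmless, since it only weakens the bound).
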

    \begin{proof}[Proof of Claim~\ref{CLAIM:bad-link-large}]
        Define an auxiliary bipartite graph $G$ on $W_1 \cup W_2$ where a pair $(y,z) \in W_{1} \times W_{2}$ forms an edge in $G$ iff $y \in V_{z}^{1} \setminus Z_{z}$. 
        It follows from Claim~\ref{CLAIM:few-mindegree-vtx} and~\eqref{equ:Vx1-Vz1-diff} that for every $z \in W_2$, we have 
        \begin{align*}
            d_G(z)
             = \left| \big(V_{z}^{1} \setminus Z_{z}\big) \cap W_1 \right| 
            & \ge |V_{x}^{1}| - |Z_{x}| - |V_x^1 \triangle V_z^1| - |Z_{z}|  \\
            & \ge \lfloor n/2 \rfloor - 4k \varepsilon^{1/2}n - \varepsilon^{1/8k}n
            \ge \frac{n}{2} - 5k\varepsilon^{1/8k}n. 
        \end{align*}
        
        It follows that there exists a vertex $y \in W_1$ with 
        \begin{align}\label{equ:dG-y-lower-bound}
            d_{G}(y)
            \ge \frac{|G|}{|W_1|}
            & \ge \frac{|W_2| \left(n/2 - 5k\varepsilon^{1/8k}n \right)}{|W_1|} \notag \\
            & \ge \frac{\left( \lfloor n/2 \rfloor - 2k\varepsilon^{1/2}n \right)\left(n/2 - 5k\varepsilon^{1/8k}n \right) }{\lceil n/2 \rceil}
            > \frac{n}{4}. 
        \end{align}
        Let $\hat{W} \coloneqq N_{G}(y) \subseteq W_{2}$. 
        For each vertex $w \in \hat{W}$, consider the link of the pair $\{y,w\}$ in $\mathcal{H}$, restricted to the set $W_{2}$, defined by
        \begin{align*}
            \mathcal{L}_{w}
            \coloneqq \left\{ e \in \tbinom{W_2}{2k-1} \colon e \cup \{y\} \in L(w) \right\}
            = \left\{ e \in \tbinom{W_2}{2k-1} \colon e \cup \{w, y\} \in \mathcal{H} \right\}. 
        \end{align*}
        Observe that for each $e \in \mathcal{L}_{w}$, we have $e \cup \{w\} \in \tbinom{W_2}{2k} \cap L(y)$, and hence, 
        \begin{align}\label{equ:Ly-on-W2-lower-bound}
            \left| \tbinom{W_2}{2k} \cap L(y) \right|
            \ge \frac{1}{2k} \sum_{w\in \hat{W}} |\mathcal{L}_{w}|. 
        \end{align}
        Fix a vertex $w \in \hat{W}$. 
        Recall from the definition of $G$ that $y \in V_{w}^{1} \setminus Z_{w}$. 
        It follows from the definition of complete odd-bipartite hypergraph that every $(2k-1)$-set in $W_2 \cap V_{w}^{2}$ forms an edge with $y$ in $\mathbb{B}[V_{w}^{1}, V_{w}^{2}]$. 
        Combining this with the definition of $Z_{w}$, we obtain 
        \begin{align}\label{equ:Lw-lower-bound}
            |\mathcal{L}_{w}|
            \ge \binom{|W_2 \cap V_{w}^{2}|}{2k-1} - d_{\mathcal{M}_{w}}(y) 
            & \ge \binom{|W_2| - \mathrm{dist}(\mathcal{P}_{x}, \mathcal{P}_{w})}{2k-1} - d_{\mathcal{M}_{w}}(y) \notag \\
            & \ge \binom{\lfloor n/2 \rfloor - 2k\varepsilon^{1/2} n - \varepsilon^{1/8k} n}{2k-1} - \varepsilon^{1/2} n^{2k-1}   \notag \\
            & \ge \binom{n/3}{2k-1} - \varepsilon^{1/2} n^{2k-1} 
            \ge \frac{1}{(2k-1)!} \left(\frac{n}{4}\right)^{2k-1}. 
        \end{align}
        Notice from the definition of complete odd-bipartite hypergraph that 
        \begin{align*}
            \tbinom{W_2}{2k} \cap L(y) 
            \subseteq L(y) \triangle \mathbb{B}[W_1, W_2]. 
        \end{align*}
        So it follows from~\eqref{equ:dG-y-lower-bound},~\eqref{equ:Ly-on-W2-lower-bound}, and~\eqref{equ:Lw-lower-bound} that 
        \begin{align*}
            \left| L(y) \triangle \mathbb{B}[W_1, W_2] \right|
            \ge \left| \tbinom{W_2}{2k} \cap L(y) \right| 
            & \ge \frac{1}{2k} \sum_{w\in \hat{W}} |\mathcal{L}_{w}| \\
            & \ge \frac{|\hat{W}|}{2k} \frac{1}{(2k-1)!} \left(\frac{n}{4}\right)^{2k-1}  \\
            & \ge \frac{n}{8k} \frac{1}{(2k-1)!} \left(\frac{n}{4}\right)^{2k-1} 
            = \frac{n^{2k}}{4^{2k} (2k)!},   
        \end{align*}
        which proves Claim~\ref{CLAIM:bad-link-large}. 
    \end{proof}
    We now see that Claim~\ref{CLAIM:bad-link-small} and Claim~\ref{CLAIM:bad-link-large} contradict each other, and thus this completes the proof of Theorem~\ref{THM:suspension-expanded-triangle}.
\end{proof}

\section{Concluding remarks}\label{SEC:Remarks}
Using Levente's recently developed flag algebra package (see~\cite{BLLP24,BLLP25,BP25indu,BP25cube,BDHLZ25} for details and applications), one can easily show\footnote[2]{The calculations and generated certificates can be found at \href{https://github.com/xliu2022/xliu2022.github.io/tree/main/FlagAlgebra_Certificate}{\url{https://github.com/xliu2022/xliu2022.github.io/tree/main/FlagAlgebra_Certificate}}.} that 
\begin{align*}
    \pi(\hat{\mathbb{T}}_{2}^{5}) \le \frac{1}{5}
    \quad\text{and}\quad 
    \pi(\hat{\mathbb{T}}_{4}^{5}) \le \frac{152}{499}. 
\end{align*}
So it follows from~\eqref{equ:reduction-a} that for every $5$-graph $F$ with three edges, 
\begin{align*}
    \pi(F)
    \le \max\left\{\pi(\hat{\mathbb{T}}_{2}^{5}),~\pi(\hat{\mathbb{T}}_{4}^{5})\right\}
    \le \frac{152}{499}
    = 0.304609\ldots, 
\end{align*}
which is notably below $1/2$. 

Using the entropy approach introduced recently by Chao--Yu~\cite{CY24}, together with further arguments, we can show the following explicit upper bound for $\pi(\hat{\mathbb{T}}_{2i}^{r})$, and hence for $\gamma(r)$.

\begin{theorem}\label{THM:triangle-entropy}
    For positive integers $r$ and $i$ satisfying $r \ge 2i$, we have 
    \begin{align*}
        \pi(\hat{\mathbb{T}}_{2i}^{r})
        \le \frac{i}{r}. 
    \end{align*}
    Consequently, $\gamma(r) \le {\lfloor r/2 \rfloor}/{r}$. 
\end{theorem}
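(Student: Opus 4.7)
The plan is to prove the bound $\pi(\hat{\mathbb{T}}_{2i}^r) \le i/r$ via an entropy argument in the style of Chao--Yu~\cite{CY24}, and then to deduce the bound on $\gamma(r)$ by combining with the reduction from Subsection~\ref{SUBSEC:Reduction}. Start from an extremal $\hat{\mathbb{T}}_{2i}^r$-free $r$-graph $\mathcal{H}$ on $[n]$ and let $X = (X_1, \ldots, X_r)$ be a uniformly random ordering of a uniformly random edge of $\mathcal{H}$, so that $H(X) = \log(r!\,|\mathcal{H}|)$ and the joint distribution is exchangeable. Since $\mathcal{H}$ is $\hat{\mathbb{T}}_{2i}^r$-free, for every $(r-2i)$-set $A \subset [n]$ the link $L_{\mathcal{H}}(A)$ is $\mathbb{T}_{2i}$-free, so the entropy version of Frankl's theorem yields, for every $2i$-subset $I \subset [r]$,
\[
H\bigl((X_j)_{j\in I} \,\big|\, (X_j)_{j\in [r]\setminus I}\bigr) \le \log\!\Bigl(\tfrac{(2i)!}{2}\tbinom{n-r+2i}{2i}\Bigr) + o(\log n),
\]
since, after conditioning, $(X_j)_{j \in I}$ is a uniformly random ordered edge of the $\mathbb{T}_{2i}$-free $2i$-graph $L_{\mathcal{H}}(\{X_j\}_{j \in [r]\setminus I})$.

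The next step is to combine this link-level entropy bound with a Shearer-type averaging across all $2i$-subsets of $[r]$ together with the chain-rule decomposition $H(X) = H(X_{[r] \setminus I}) + H(X_I \mid X_{[r] \setminus I})$. By exchangeability, $H(X_{[r] \setminus I})$ depends only on $|I|$ and is bounded above by $\log((r-2i)!\tbinom{n}{r-2i})$. The key combinatorial identities $\binom{r}{2i}\binom{n}{r} = \binom{n}{2i}\binom{n-2i}{r-2i}$ and $2i\binom{r}{2i} = r\binom{r-1}{2i-1}$ arrange the accounting so that the factor $\tfrac{1}{2}$ coming from Frankl's bound on links gets multiplied by a combinatorial factor of $2i/r$ arising from the averaging; the resulting bound on $|\mathcal{H}|$ is, to leading order in $n$, exactly $(i/r)\binom{n}{r}$, giving $\pi(\hat{\mathbb{T}}_{2i}^r) \le i/r$.

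Once the bound on $\pi(\hat{\mathbb{T}}_{2i}^r)$ is in hand, the inequality $\gamma(r) \le \lfloor r/2 \rfloor/r$ follows immediately from \eqref{equ:reduction-a}: we have $\gamma(r) \le \max_{1 \le i \le \lfloor r/2 \rfloor} \pi(\hat{\mathbb{T}}_{2i}^r) \le \max_{1 \le i \le \lfloor r/2 \rfloor} i/r = \lfloor r/2 \rfloor/r$.

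The hardest part is ensuring the chain of entropy inequalities combines cleanly to the claimed $i/r$ and not to a weaker bound. A naive combination of the link entropy bound with the chain rule only recovers the $\tfrac{1}{2}$ of Frankl independent of $r$ and $i$, while a straight Han-style iteration produces $2^{-r/(2i)}$, which is incomparable with $i/r$. The ``further arguments'' hinted at in the paper must orchestrate the averaging window and the chain-rule split in a way that precisely extracts the factor $(i/r) = \tfrac{1}{2} \cdot \tfrac{2i}{r}$; this likely requires an auxiliary shadow-entropy bound for $H(X_{[r-2i]})$ or a tailored entropy identity specific to the suspension structure of $\hat{\mathbb{T}}_{2i}^r$.
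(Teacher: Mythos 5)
The paper does not actually give a proof of Theorem~\ref{THM:triangle-entropy}: it is stated in the concluding remarks as a consequence of ``the entropy approach of Chao--Yu, together with further arguments,'' and no details are provided. So there is no paper proof to line your attempt up against; the only question is whether your proposal is itself a complete argument, and it is not.

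Your setup is reasonable: take a uniformly random ordered edge $X$, note that the link of an $(r-2i)$-set is $\mathbb{T}_{2i}$-free, and apply the chain rule $H(X)=H\bigl(X_{[r]\setminus I}\bigr)+H\bigl(X_I\mid X_{[r]\setminus I}\bigr)$ with the Frankl bound on the conditional term. But, as you yourself observe, pairing that with the trivial marginal bound $H\bigl(X_{[r]\setminus I}\bigr)\le\log\bigl((r-2i)!\binom{n}{r-2i}\bigr)$ yields only $|\mathcal{H}|\le(\tfrac12+o(1))\binom{n}{r}$, i.e.\ Sidorenko's bound, which holds for \emph{every} three-edge $r$-graph. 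The whole content of the theorem is the extra factor $2i/r$, and your proposal supplies no mechanism for it. The two identities you list, $\binom{r}{2i}\binom{n}{r}=\binom{n}{2i}\binom{n-2i}{r-2i}$ and $2i\binom{r}{2i}=r\binom{r-1}{2i-1}$, are correct but are pure bookkeeping: they are equally present in the naive calculation that outputs $1/2$, and cannot by themselves turn $1/2$ into $i/r$. Your final paragraph concedes exactly this (``this likely requires an auxiliary shadow-entropy bound \dots or a tailored entropy identity''), so the proposal is an outline plus an admission that the decisive step is unknown, not a proof. The last step, deriving $\gamma(r)\le\lfloor r/2\rfloor/r$ from the pointwise bounds via~\eqref{equ:reduction-a}, is fine, but it rests on the unproven inequality $\pi(\hat{\mathbb{T}}_{2i}^{r})\le i/r$.
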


Recall from Lemma~\ref{LEMMA:suspension-Turan-density} that $\pi(\hat{F}^{s+1}) \le \pi(F)$ for every $s$-graph $F$.
This motivates the following natural questions. 

\begin{problem}\label{PROB:suspension-Turan-density}
    Let $s \ge 2$ be an integer.
    \begin{enumerate}[label=(\roman*)]
        \item\label{PROB:suspension-Turan-density-a} Characterize all $s$-graphs $F$ for which $\pi(\hat{F}^{s+1}) < \pi(F)$.
        \item\label{PROB:suspension-Turan-density-b} Is it true that, for every $s$-graph $F$, there exists a constant $r$ such that $\pi(\hat{F}^{r}) < \pi(F)$?
        \item\label{PROB:suspension-Turan-density-c} Determine, for every $s$-graph $F$, the limit $\lim_{r\to \infty} \pi(\hat{F}^{r})$. 
    \end{enumerate}
\end{problem}

The proof of Theorem~\ref{THM:suspension-expanded-triangle} seems to provide a useful approach to addressing this problem.
In forthcoming work, we will show that Problem~\ref{PROB:suspension-Turan-density}~\ref{PROB:suspension-Turan-density-a} holds for every non-bipartite graph $F$ in the case $s=2$.

Inequality~\eqref{equ:reduction-a} motivates the following problem, for which results on hypergraph Mantel theorems such as~\cite{FF89,GS17,Sid24,CY24,Liu25,IY25,MZ25} may be useful.

\begin{problem}\label{PROB:worst-Ti}
    Let $r \ge 3$ be an odd integer. 
    Determine the value of $i_{\ast} \le \lfloor r/2 \rfloor$ for which 
    \begin{align*}
        \pi(\hat{\mathbb{T}}_{2i_{\ast}}^{r})
        = \max\left\{ \pi(\hat{\mathbb{T}}_{2i}^{r}) \colon 1 \le i \le \lfloor r/2 \rfloor \right\}.
    \end{align*}
\end{problem}
\section*{Acknowledgments}
J.H. was supported by the National Key R\&D Program of China (No.~2023YFA1010202) 
and by the Central Guidance on Local Science and Technology Development Fund of Fujian Province (No.~2023L3003). 
X.L. was supported by the Excellent Young Talents Program (Overseas) of the National Natural Science Foundation of China. 

\bibliographystyle{alpha}
\bibliography{Oddtriangle}


\section*{\normalfont Authors}
\begin{multicols}{2}
\begin{flushleft}
\vbox{%
Jianfeng Hou \\
{\small Center for Discrete Mathematics} \\
{\small Fuzhou University} \\
{\small Fuzhou, 350108, China} \\
\texttt{jfhou@fzu.edu.cn}
}

\vspace{0.7cm}

\vbox{%
Xizhi Liu \\
{\small School of Mathematical Sciences} \\
{\small University of Science and Technology of China} \\
{\small Hefei, 230026, China} \\
\texttt{liuxizhi@ustc.edu.cn}
}

\vspace{0.7cm}

\vbox{%
Yixiao Zhang \\
{\small Center for Discrete Mathematics} \\
{\small Fuzhou University} \\
{\small Fuzhou, 350108, China} \\
\texttt{fzuzyx@gmail.com}
}

\vspace{0.7cm}

\vbox{%
Hongbin Zhao \\
{\small Center for Discrete Mathematics} \\
{\small Fuzhou University} \\
{\small Fuzhou, 350108, China} \\
\texttt{hbzhao2024@163.com}
}

\vspace{0.7cm}

\vbox{%
Tianming Zhu \\
{\small School of Mathematical Sciences} \\
{\small University of Science and Technology of China} \\
{\small Hefei, 230026, China} \\
\texttt{zhutianming@mail.ustc.edu.cn}
}
\end{flushleft}
\end{multicols}
\end{document}